\def\@tocline#1#2#3#4#5#6#7{\relax
  \ifnum #1>\c@tocdepth 
  \else
    \par \addpenalty\@secpenalty\addvspace{#2}%
    \begingroup \hyphenpenalty\@M
    \@ifempty{#4}{%
      \@tempdima\csname r@tocindent\number#1\endcsname\relax
    }{%
      \@tempdima#4\relax
    }%
    \parindent\z@ \leftskip#3\relax \advance\leftskip\@tempdima\relax
    \rightskip\@pnumwidth plus4em \parfillskip-\@pnumwidth
    #5\leavevmode\hskip-\@tempdima
      \ifcase #1
       \or\or \hskip 1em \or \hskip 2em \else \hskip 3em \fi%
      #6\nobreak\relax
    \hfill\hbox to\@pnumwidth{\@tocpagenum{#7}}\par
    \nobreak
    \endgroup
  \fi}
\numberwithin{equation}{section}
\numberwithin{figure}{section}
\theoremstyle{plain}
\newtheorem{thm}{Theorem}[section]
  \theoremstyle{definition}
  \newtheorem*{defn*}{Definition}
  \theoremstyle{remark}
  \newtheorem*{rem*}{Remark}
  \theoremstyle{plain}
  \newtheorem{lem}[thm]{Lemma}
  \theoremstyle{plain}
  \newtheorem{prop}[thm]{Proposition}
  \theoremstyle{definition}
\global\long\def\dist{\mathrm{dist}}
\newcommand{\norm}{\@ifstar{\@normb}{\@normi}}
\newcommand{\@normb}[2]{\left\Vert{#1}\right\Vert_{#2}}
\newcommand{\@normi}[2]{\Vert{#1}\Vert_{#2}}
\global\long\def\Sob#1#2{W^{#1,#2}}
\global\long\def\Leb#1{L^{#1}}
\global\long\def\Lor#1#2{L^{#1,#2}}
\DeclareMathOperator{\supp}{supp}
\global\long\def\oSob#1#2{{W}^{#1,#2}_{0}}
\global\long\def\loc{\mathrm{loc}}
\newcommand{\action}[1]{\left<#1 \right>}
\newcommand{\boldb}{\mathbf{b}}
\newcommand{\boldF}{\mathbf{F}}
\newcommand{\boldG}{\mathbf{G}}
\newcommand{\myd}[1]{\,d#1}
\DeclareMathOperator{\Div}{div}
\newcommand{\relphantom}[1]{\mathrel{\phantom{#1}}}
\title[$W^{1,2+\varepsilon}$-result for elliptic equation with singular drifts]{Existence and uniqueness of weak solution in $W^{1,2+\varepsilon}$ for elliptic equation with drifts in weak-$L^{n}$ spaces}
\author{Hyunwoo Kwon}
\address{(28187) Department of Mathematics, Republic of Korea Air Force Academy, Postbox 335-2, 635, Danjae-ro   Sangdang-gu, Cheongju-si  Chung\-cheong\-buk-do, Republic of Korea}
\email{willkwon@sogang.ac.kr; willkwon@afa.ac.kr}
\keywords{elliptic equations, singular drift terms, Reverse H\"older estimates}
\subjclass[2010]{35J25, 35B65} 
\date{\today}
\begin{document}
\begin{abstract}
We consider the following Dirichlet problems for elliptic equations with singular drift $\mathbf{b}$:
\[ \text{(a) } -\operatorname{div}(A \nabla  u)+\operatorname{div}(u\mathbf{b})=f,\quad \text{(b) } -\operatorname{div}(A^T \nabla v)-\mathbf{b} \cdot \nabla v =g \quad \text{in } \Omega, \]
where $\Omega$ is a bounded Lipschitz domain in $\mathbb{R}^n$, $n\geq 2$. Assuming that $\mathbf{b}\in L^{n,\infty}(\Omega)^n$ has non-negative weak divergence in $\Omega$, we establish existence and uniqueness of weak solution in  $W^{1,2+\varepsilon}_0(\Omega)$ of the problem (b) when $A$ is bounded and uniformly elliptic. As an application, we prove existence and uniqueness of weak solution $u$ in $\bigcap_{q<2} W^{1,q}_0(\Omega)$ for the problem (a) for every $f\in \bigcap_{q<2} W^{-1,q}(\Omega)$.
\end{abstract}
\maketitle

\section{Introduction}

Given a vector field $\boldb:\Omega\rightarrow \mathbb{R}^n$, we consider the following Dirichlet problems for elliptic equations of second-order:
\begin{equation}\label{eq:divergence-type}
\left\{\begin{alignedat}{2}
-\Div (A \nabla u)+\Div(u\boldb)&=f &&\quad \text{in } \Omega,\\
u&=0 &&\quad \text{on } \partial\Omega,
\end{alignedat}
\right.
\end{equation}
and
\begin{equation}\label{eq:non-divergence-type}
\left\{\begin{alignedat}{2}
-\Div(A^T \nabla v)-\boldb \cdot \nabla v&=g &&\quad \text{in } \Omega,\\
v&=0&&\quad \text{on } \partial\Omega,
\end{alignedat} 
\right.
\end{equation}
on a bounded  domain  $\Omega$  in $\mathbb{R}^n$, $n\geq 2$. Here $A=(a^{ij})_{1\leq i,j\leq n} : \mathbb{R}^n \rightarrow \mathbb{R}^{n^2}$ is a given matrix valued function. We assume that $A$ is bounded and uniformly elliptic, that is, there exist $0<\delta<1$ and $K>1$ such that 
\begin{equation}\label{eq:uniformly-elliptic}
\delta |\xi|^2 \leq A(x)\xi \cdot \xi \quad \text{and}\quad |A(x)|\leq K\quad \text{for all } \xi \in \mathbb{R}^n\quad \text{and}\quad \text{a.e. } x\in \mathbb{R}^n.
\end{equation}

Suppose that $g\in \Sob{-1}{q}(\Omega)$ for some $1<q<\infty$. For $1<p<\infty$, we say that $v\in \oSob{1}{p}(\Omega)$ is a weak solution of the problem \eqref{eq:non-divergence-type} if 
\begin{equation}\label{eq:weak-sol-nondiv}
    \boldb\cdot \nabla v \in \Leb{1}(\Omega)^n\quad \text{and}\quad \int_\Omega (A^T\nabla v)\cdot \nabla \phi  - (\boldb \cdot \nabla v)\phi \myd{x}=\action{g,\phi} 
\end{equation} 
for all $\phi \in C_c^\infty(\Omega)$. Here $C_c^\infty(\Omega)$ denotes the set of all smooth functions with compact support in $\Omega$. Weak solution for \eqref{eq:divergence-type} can be similarly defined.

When $\boldb=\mathbf{0}$, Meyers \cite{M63} proved that if $\Omega$ is a bounded smooth  domain in $\mathbb{R}^n$, then there exists $1<p_0<2$ such that for $p_0<p<p_0'$, if $f\in \Sob{-1}{p}(\Omega)$, then there exists a unique weak solution $u\in \oSob{1}{p}(\Omega)$ of \eqref{eq:divergence-type}. Moreover, we have 
\[   \norm{\nabla u}{\Leb{p}(\Omega)} \leq C \norm{f}{\Sob{-1}{p}(\Omega)} \]
for some constant $C=C(n,\delta,p,K,\Omega)$. Here $p_0'$ is the H\"older conjugate to $p_0$ defined by $p_0'=p_0/(p_0-1)$. For several decades, many authors have studied  $\Sob{1}{p}$-estimate for problems \eqref{eq:divergence-type} and \eqref{eq:non-divergence-type} under various assumptions on $A$ and $\Omega$  with bounded vector fields $\boldb:\Omega \rightarrow \mathbb{R}^n$, see \cite{AQ, B, BW, DK} e.g.

One naturally asks the solvability of the problems \eqref{eq:divergence-type} and \eqref{eq:non-divergence-type} when $\boldb$ is unbounded. To explain the results, we fix a number $r$ so that $n\leq r<\infty$ if $n\geq 3$ and $2<r<\infty$ if $n=2$. In Ladyzhenskaya-Ural'tseva \cite[Theorems 5.3, 5.5, Chapter 3]{LU68}, they considered the following Dirichlet problem 
\[   -\Div(A\nabla u +u\boldb)+\mathbf{c}\cdot \nabla u=f+\Div \boldF\quad \text{in } \Omega,\quad u=0\quad \text{on } \partial\Omega \]
where  $A$ satisfies \eqref{eq:uniformly-elliptic}, $\boldb,\mathbf{c} \in \Leb{r}(\Omega)^n$, and $\Omega$ is a bounded Lipschitz domain in $\mathbb{R}^n$.   It was shown that for every $f \in \Leb{2\hat{n}/(\hat{n}+2)}(\Omega)$ and $\boldF \in \Leb{2}(\Omega)^n$, there exists a unique weak solution $u\in \oSob{1}{2}(\Omega)$ of the problem. Here $\hat{n}=n$ if $n\geq 3$ and $\hat{n}=2+\varepsilon$ for some $\varepsilon>0$.  Stampacchia \cite{S65} also considered similar problem   under the assumption that $f\in \Leb{2}(\Omega)$, $\boldF \in \Leb{2}(\Omega)^n$, $\boldb, \mathbf{c}\in \Leb{n}(\Omega)^n$, $n\geq 3$, see \cite[Th\'eor\`eme 3.4]{S65} for details.  Later, Droniou \cite{D02} gave another proof for $\Sob{1}{2}$-estimates of weak solutions of problems \eqref{eq:divergence-type} and \eqref{eq:non-divergence-type} when $A$ satisfies \eqref{eq:uniformly-elliptic} and  $\boldb \in \Leb{r}(\Omega)^n$ on a bounded Lipschitz domain in $\mathbb{R}^n$. 

An extension to $\Sob{1}{p}$-result with $p\neq 2$ was first shown by  Kim-Kim \cite{KK15}. In that paper,  $\Sob{1}{p}$-estimate was established for \eqref{eq:divergence-type} when $1<p<r$, $A=I$,  $\boldb \in \Leb{r}(\Omega)^n$, and $\Omega$ is bounded $C^1$-domain in $\mathbb{R}^n$. Later, Kang-Kim \cite{KK17} extend the result of Kim-Kim \cite{KK15} to the case when $A$ has partially small BMO in the sense of Dong-Kim \cite{DK} and $\Omega$ has a small Lipschitz constant. Another extensions including Bessel potential spaces were shown by Kim and the author \cite{KK18} when $A=I$, $\boldb \in \Leb{n}(\Omega)^n$, and $\Omega$ is arbitrary bounded Lipschitz domain in $\mathbb{R}^n$,  $n\geq 3$. Similar results also hold for the dual problem \eqref{eq:non-divergence-type}. Results on parabolic equations with unbounded drifts were recently obtained by Kim-Ryu-Woo \cite{KRW20}.

Note that the space $\Leb{n}(\Omega)^n$ is optimal among $\Leb{r}(\Omega)^n$, $n\geq 3$ for the drift $\boldb$ to guarantee unique solvability for problems \eqref{eq:divergence-type} and \eqref{eq:non-divergence-type}.   Indeed, let $A=I$, $\Omega$ be a unit ball centered at the origin, $\boldb(x)=(2-n)x/|x|^2$, and $v(x)=\ln |x|$. Then $v\in \oSob{1}{2}(\Omega)$ is a non-trivial weak solution of \eqref{eq:non-divergence-type} {with $g=0$}. Note that $\boldb \in \Leb{q}(\Omega)^n$ for any $q<n$, in particular, $\boldb \in \Lor{n}{\infty}(\Omega)^n$. On the other hand, as pointed out by Monscariello \cite{M11}, $p$-weak solution of \eqref{eq:divergence-type} may fail to exist for general $\boldb \in \Lor{n}{\infty}(\Omega)^n$. Therefore, in order to discuss unique solvability of weak solutions for \eqref{eq:divergence-type} and \eqref{eq:non-divergence-type}, we need to assume additional condition on $\boldb\in \Lor{n}{\infty}(\Omega)^n$.  

There are few results on unique solvability of weak solutions for \eqref{eq:divergence-type} and \eqref{eq:non-divergence-type} when $\boldb$ belongs to weak $\Leb{n}$-spaces. It was first shown by Monscariello \cite{M11} that if $\boldb\in \Lor{n}{\infty}(\Omega)^n$ has sufficiently small $\norm{\boldb}{\Lor{n}{\infty}}$ and $n\geq 3$, then problem \eqref{eq:divergence-type} has a unique weak solution in $\oSob{1}{p}(\Omega)$ for $p$ sufficiently close to $2$. Motivated partially by fluid mechanics, Kim-Tsai \cite{KT20} established $\Sob{1}{p}$-estimate and  $\Sob{1}{p'}$-estimate for \eqref{eq:divergence-type} and \eqref{eq:non-divergence-type} when $2\leq p<n$, respectively, under the assumption that $A=I$, $n\geq 3$, and  $\Div \boldb \geq 0$ in $\Omega$. Here $\Div \boldb \geq 0$ in $\Omega$ means that 
\[  -\int_\Omega \boldb \cdot \nabla \phi \myd{x}\geq 0\quad \text{for all non-negative } \phi \in C_c^\infty(\Omega).  \] 
They also obtained $\Sob{1}{p}$-estimate and  $\Sob{1}{p'}$-estimate for \eqref{eq:divergence-type} and \eqref{eq:non-divergence-type}, respectively when $n'< p<2$ in a bounded $C^1$-domain in $\mathbb{R}^n$ under an additional assumption that $\Div \boldb \in \Lor{n/2}{\infty}(\Omega)$.   In the same paper, it was shown in Theorem 2.3 that if $\Omega$ is a bounded $C^{1,1}$-domain in $\mathbb{R}^n$, $n\geq 3$, and $\boldb\in \Lor{n}{\infty}(\Omega)^n$ satisfies $\Div \boldb \geq 0$ in $\Omega$ and $\Div \boldb \in \Lor{n/2}{\infty}(\Omega)$, then for given data $g\in \Sob{-1}{q}(\Omega)$ and $q>n$, any weak solution $v$ of \eqref{eq:non-divergence-type} is in $\oSob{1}{n+\varepsilon}(\Omega)$ for some $0<\varepsilon\leq q-n$, where $\varepsilon>0$  depends only on $n$, $p$, $\Omega$, $\norm{\boldb}{\Lor{n}{\infty}(\Omega)}$, and $\norm{\Div\boldb}{\Lor{n/2}{\infty}(\Omega)}$. For the dual problem, it was shown that if $f\in \Sob{-1}{p}(\Omega)$ for all $p<2$, then {there exists a unique $u\in \bigcap_{p<n'} \oSob{1}{p}(\Omega)$ satisfying \eqref{eq:divergence-type}.} 

Our paper is motivated by the recent paper of Chernobai-Shilkin \cite{CS19}.  They considered the equation \eqref{eq:non-divergence-type} in bounded $C^1$-domain  $\Omega \subset \mathbb{R}^2$ with $0\in \Omega$ when $A=I$  and $\boldb=\boldb_0+\alpha x/|x|^2$, where  $\alpha$ is a real number and $\boldb_0 \in \Lor{2}{\infty}(\Omega)^2$ satisfies $\Div \boldb_0 =0$ in $\Omega$.  {In the case of $\alpha\geq 0$,} they obtain existence and uniqueness of weak solution $v$ in $\oSob{1}{2+\varepsilon}(\Omega)$ for the problem \eqref{eq:non-divergence-type} for given data $g\in \Sob{-1}{q}(\Omega)$ with $q>2$,  which can be regarded as a partial extension of Theorem 2.3 in Kim-Tsai \cite{KT20}. 

The purpose of this paper is to extend results of  Chernobai-Shilkin \cite{CS19} and Kim-Tsai \cite{KT20}.   More precisely, under the assumption that $\Omega$ is a bounded Lipschitz domain in $\mathbb{R}^n$ ($n\geq 2$),  $A$ satisfies \eqref{eq:uniformly-elliptic}, and $\boldb \in \Lor{n}{\infty}(\Omega)^n$ which has non-negative divergence in $\Omega$, we will show that for every $g\in \Sob{-1}{q}(\Omega)$, $q>2$, there exists a unique weak solution $v\in \oSob{1}{2+\varepsilon}(\Omega)$ of \eqref{eq:non-divergence-type}, where $\varepsilon$ depends only on $\delta$,  $K$, $\norm{\boldb}{\Lor{n}{\infty}(\Omega)}$, and the Lipschitz character of $\Omega$. Without assuming $\Div \boldb$ has integrability condition, existence of weak solution of \eqref{eq:divergence-type} seems to be open for general $f\in \Sob{-1}{p}(\Omega)$, $p<2$. However, we will show that if $f\in \Sob{-1}{p}(\Omega)$ for all $p<2$, then the problem \eqref{eq:divergence-type} has a unique weak solution $u$ in $ \bigcap_{p<2} \oSob{1}{p}(\Omega)$. 

 Now we present main results of this paper. The first theorem concerns existence and uniqueness of weak solution in  $\oSob{1}{2+\varepsilon}(\Omega)$ for the problem \eqref{eq:non-divergence-type} for given $g\in \Sob{-1}{q}(\Omega)$, $q>2$.  

\begin{thm}\label{thm:main-theorem-1}
Let   $\Omega$ be a bounded Lipschitz domain in $\mathbb{R}^n$, $n\geq 2$. Assume that $A$ satisfies \eqref{eq:uniformly-elliptic}, and  $\boldb \in \Lor{n}{\infty}(\Omega)^n$, and $\Div \boldb \geq 0$ in $\Omega$.  Then for $q>2$, there exists $0<\varepsilon<q-2$ depending on {$n$, $q$,} $\delta$, $K$,  $\norm{\boldb}{\Lor{n}{\infty}(\Omega)}$, and the Lipschitz character of $\Omega$  such that if $g \in \Sob{-1}{q}(\Omega)$, then there exists a unique weak solution $v\in \oSob{1}{2+\varepsilon}(\Omega)$ of \eqref{eq:non-divergence-type}. Moreover, we have 
\[    \norm{\nabla v}{\Leb{2+\varepsilon}(\Omega)} \leq C \norm{g}{\Sob{-1}{q}(\Omega)}\]
for some constant $C$ depending  on $n,q,\delta,\Omega,K$, and $\norm{\boldb}{\Lor{n}{\infty}(\Omega)}$. 
\end{thm}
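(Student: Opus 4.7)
The plan is to combine the sign condition on $\Div \boldb$ with a Gehring-type self-improvement argument to obtain existence together with the improved integrability, while uniqueness follows from the a priori bound. I would begin with a basic $\oSob{1}{2}$ energy estimate: testing the equation with $v$ itself (justified via a truncation $T_k(v)$ of the test function if needed) gives
\[
\int_\Omega A^T \nabla v \cdot \nabla v \myd{x} = \action{g,v} + \int_\Omega (\boldb \cdot \nabla v) v \myd{x} = \action{g, v} - \tfrac{1}{2}\action{\Div \boldb, v^2}.
\]
Since $\Div \boldb \geq 0$, the last term is non-positive, so ellipticity yields $\|\nabla v\|_{\Leb{2}(\Omega)} \leq C \|g\|_{\Sob{-1}{2}(\Omega)}$, and existence in $\oSob{1}{2}(\Omega)$ follows from Lax--Milgram, using the Lorentz--H\"older bound $\Lor{n}{\infty} \cdot \Leb{2} \cdot \Lor{2n/(n-2)}{2} \subset \Leb{1}$ to make sense of the cross drift pairing (with a suitable adaptation when $n=2$).

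The core of the proof is a Caccioppoli-type reverse H\"older inequality on interior balls $B_{2r}\subset\Omega$ and on boundary half-balls. I test with $\zeta^2(v-c)$, where $\zeta$ is a standard cutoff and $c=(v)_{B_{2r}}$, and rewrite the drift term by integration by parts:
\[
\int \boldb \cdot \nabla v\,\zeta^2(v-c) \myd{x}
= -\tfrac{1}{2}\action{\Div \boldb,\,(v-c)^2 \zeta^2}
- \int (v-c)^2 \zeta\, \boldb \cdot \nabla \zeta \myd{x}.
\]
The first piece, moved to the left, has the favorable sign from $\Div \boldb \geq 0$, while the second is controlled by Lorentz--H\"older together with the sharp Sobolev--Poincar\'e embedding in Lorentz spaces, $\|v-c\|_{\Lor{2n/(n-1)}{2}(B_{2r})} \leq C\|\nabla v\|_{\Lor{2n/(n+1)}{2}(B_{2r})}$. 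Combined with the classical Sobolev--Poincar\'e bound for $r^{-2}\int|v-c|^2$, this produces the reverse H\"older inequality
\[
\left(|B_r|^{-1}\int_{B_r} |\nabla v|^2 \myd{x}\right)^{1/2}
\leq C\left(|B_{2r}|^{-1}\int_{B_{2r}} |\nabla v|^{s_0} \myd{x}\right)^{1/s_0}
+ C\cdot (\text{data}),
\]
with $s_0 = 2n/(n+1) < 2$ and a constant $C$ independent of $r$. A Gehring-type lemma (interior and up-to-the-boundary versions for Lipschitz domains) then upgrades this to $\nabla v \in \Leb{2+\varepsilon}(\Omega)$ for some $0<\varepsilon<q-2$ depending only on the stated parameters, with the desired quantitative estimate.

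Existence is completed by approximating $\boldb$ with mollifications $\boldb^\epsilon = \rho_\epsilon*\boldb$: since convolution of a non-negative distribution with a non-negative kernel is non-negative, the condition $\Div \boldb^\epsilon \geq 0$ is preserved, the bounded drifts make each approximate problem classically solvable, and the uniform $\oSob{1}{2+\varepsilon}$ estimate allows passage to the limit. Uniqueness reduces to the energy estimate of the first step applied to the difference of two solutions. The main technical obstacle is that $\Lor{n}{\infty}$ is scale-critical, so $\|\boldb\|_{\Lor{n}{\infty}(B_{2r})}$ cannot be made small by localization; a naive H\"older estimate of the drift would produce an $\int_{B_{2r}}|\nabla v|^2$ term with a non-small coefficient on the right-hand side and spoil the reverse H\"older structure. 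The integration-by-parts transfer of the derivative off $v$, combined with the sharp Lorentz--Sobolev embedding yielding a sub-$2$ exponent, is what overcomes this, and the case $n=2$ requires particular care since the critical Sobolev embedding is borderline and demands a careful choice of Lorentz indices throughout.
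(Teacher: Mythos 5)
Your outline matches the paper's architecture closely: an energy estimate and Lax--Milgram for $\oSob{1}{2}(\Omega)$ solvability when $n\geq3$, a Caccioppoli-type reverse H\"older inequality obtained by transferring the derivative off $v$ by parts and using the sign of $\Div\boldb$ together with the Lorentz--Sobolev embedding, a Gehring lemma, and then an approximation argument. The key analytical idea --- that $\Div\boldb\geq 0$ absorbs the dangerous term and the sub-$2$ Lorentz--Sobolev exponent avoids a non-small coefficient on $\int_{B_{2r}}|\nabla v|^2$ --- is exactly the paper's.

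There is a genuine gap in the existence-via-approximation step. You assert that for $\boldb^\epsilon=\rho_\epsilon*\boldb$ ``the condition $\Div\boldb^\epsilon\geq 0$ is preserved, since convolution of a non-negative distribution with a non-negative kernel is non-negative.'' But $\Div\boldb$ is a non-negative distribution only on $\Omega$; to mollify you must first extend $\boldb$ by zero to $\mathbb{R}^n$, and the distribution $\Div\overline{\boldb}$ on $\mathbb{R}^n$ picks up a boundary contribution (morally $-\boldb\cdot\nu\,d\sigma$ on $\partial\Omega$) that has no sign. Consequently $\Div\boldb^\epsilon\geq 0$ can be guaranteed only at points $x$ with $\dist(x,\partial\Omega)>\epsilon$, not on all of $\Omega$. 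The paper resolves this by coupling the mollification with Verchota's approximation of $\Omega$ by interior smooth domains $\Omega_k$ with $\overline{\Omega}_k\subset\Omega$ and uniformly controlled Lipschitz character, choosing $\epsilon_k<\dist(\overline{\Omega}_k,\Omega^c)$ so that $\Div\boldb_{\epsilon_k}\geq 0$ holds on $\Omega_k$; the uniform Lipschitz control is then what makes the Gehring constant independent of $k$. Without this domain approximation, both the sign condition near $\partial\Omega$ and the uniformity of the reverse H\"older constants are in question, and your passage to the limit does not close. Relatedly, the $n=2$ case is not a routine ``adaptation'': for $\boldb\in\Lor{2}{\infty}(\Omega)^2$ and $v\in\oSob{1}{2}(\Omega)$ the product $\boldb\cdot\nabla v$ need not pair with $\oSob{1}{2}$ test functions, so the Lax--Milgram/energy step you lead with is not available, and the entire $n=2$ existence argument must be run through the approximation scheme from the start; this is the central technical point that your sketch elides.
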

\begin{rem*}
(i) When $n=2$, Theorem \ref{thm:main-theorem-1} extends Theorem 2.3 (i) in Kim-Tsai \cite{KT20} and Theorem 1.1 in Chernobai-Shilkin \cite{CS19}. In contrast to Theorem 2.3 (i) in Kim-Tsai \cite{KT20}, Theorem \ref{thm:main-theorem-1} does not require  integrability condition on $\Div\boldb$. 

(ii) Suppose that {$A$ satisfies \eqref{eq:uniformly-elliptic},} $\Omega$ is a bounded smooth domain in $\mathbb{R}^n$ ($n\geq 3$), and $\boldb \in \Leb{n}(\Omega)^n$. Using Meyer's result \cite{M63} and an argument given in  Kang-Kim \cite{KK17}, one can show that there exists $1<p_0<2$ depending on $n$, $\Omega$, $\delta$, and $K$  such that for any $p_0<p<p_0'$ and $g\in \Sob{-1}{p}(\Omega)$, there exists a unique $v\in \oSob{1}{p}(\Omega)$ satisfying \eqref{eq:non-divergence-type}. Moreover, we have 
\[  \norm{v}{\Sob{1}{p}(\Omega)} \leq C\norm{g}{\Sob{-1}{p}(\Omega)} \]
for some constant $C=C(n,p,\boldb,K,\delta,\Omega)>0$.  A key tool to prove this result is the following $\varepsilon$-inequality:
\[  \norm{\boldb\cdot \nabla v}{\Sob{-1}{p}(\Omega)}\leq \varepsilon \norm{v}{\Sob{1}{p}(\Omega)}+C(\varepsilon,n,p,\Omega,\boldb) \norm{v}{\Leb{p}(\Omega)},  \]
see \cite{KK17} for details. However, lack of a density property in $\Lor{n}{\infty}(\Omega)$ prevents us to derive such $\varepsilon$-inequality for general $\boldb \in \Lor{n}{\infty}(\Omega)^n$. 
\end{rem*}   

For simplicity of presentation, let us define 
\[   \oSob{1}{p-}(\Omega) = \bigcap_{q<p} \oSob{1}{q}(\Omega)\quad \text{and}\quad \Sob{-1}{p-}(\Omega) = \bigcap_{q<p} \Sob{-1}{q}(\Omega). \]

As an application of Theorem \ref{thm:main-theorem-1}, we have the following unique solvability result on the problem \eqref{eq:divergence-type} in $\oSob{1}{2-}(\Omega)$ for given data $f\in \Sob{-1}{2-}(\Omega)$.
\begin{thm}\label{thm:main-theorem-2}
Let  $\Omega$ be a bounded Lipschitz domain in $\mathbb{R}^n$, $n\geq 2$. Assume that  $A$ satisfies \eqref{eq:uniformly-elliptic} and, $\boldb \in \Lor{n}{\infty}(\Omega)^n$, and $\Div \boldb \geq 0$ in $\Omega$. 
\begin{enumerate}
\item[\rm (i)] There exists $1<r<2$ close to $2$, depending on $\delta$, $K$, $n$, $\norm{\boldb}{\Lor{n}{\infty}(\Omega)}$, and Lipschitz character of $\Omega$ such that if $u\in \oSob{1}{r}(\Omega)$ satisfies 
\begin{equation} \label{eq:weak-solution-uniqueness}
 \int_{\Omega} (A \nabla u)\cdot \nabla \psi \myd{x}-\int_\Omega (u\boldb)\cdot \nabla \psi \myd{x}=0 
 \end{equation}
for all $\psi \in C^1(\overline{\Omega})$ with $\psi=0$ on $\partial\Omega$, then $u$ is identically zero in $\Omega$.  
\item[\rm (ii)]  For every $f\in \Sob{-1}{2-}(\Omega)$, there exists a unique $u\in \oSob{1}{2-}(\Omega)$ satisfying \eqref{eq:divergence-type}.   
\end{enumerate}
\end{thm}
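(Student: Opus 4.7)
Both parts of the theorem will be reduced to a duality argument with Theorem~\ref{thm:main-theorem-1}. Fix once and for all some $q_0>2$, let $\varepsilon_0=\varepsilon(q_0)\in(0,q_0-2)$, and let $T_0\colon \Sob{-1}{q_0}(\Omega)\to \oSob{1}{2+\varepsilon_0}(\Omega)$ be the corresponding solution operator for~\eqref{eq:non-divergence-type} from Theorem~\ref{thm:main-theorem-1}. Set $r:=(2+\varepsilon_0)'$ (just below $2$), so that $r'=2+\varepsilon_0$; this is the candidate uniqueness exponent in (i).

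\emph{Part (i).} Given $u\in \oSob{1}{r}(\Omega)$ satisfying~\eqref{eq:weak-solution-uniqueness}, I first extend the identity to all test functions $\psi\in \oSob{1}{r'}(\Omega)$ by density of $C_c^\infty(\Omega)$: ordinary H\"older controls $A\nabla u\cdot\nabla\psi$, while H\"older's inequality in $\Lor{n}{\infty}(\Omega)$ combined with the Sobolev embedding $\oSob{1}{r}\hookrightarrow \Leb{r^*}$ controls $u\,\boldb\cdot\nabla\psi$ thanks to the balance $1/r^*+1/n+1/r'=1$. A parallel density argument extends the weak form~\eqref{eq:weak-sol-nondiv} for $v\in \oSob{1}{2+\varepsilon_0}(\Omega)$ to all $\phi\in \oSob{1}{r}(\Omega)$. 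For any $g\in C_c^\infty(\Omega)$, substituting $\psi=T_0g\in \oSob{1}{r'}(\Omega)$ into the extended~\eqref{eq:weak-solution-uniqueness} and $\phi=u$ into the extended~\eqref{eq:weak-sol-nondiv}, and using the pointwise identities $A^T\nabla v\cdot\nabla u=A\nabla u\cdot\nabla v$ and $(\boldb\cdot\nabla v)u=u\boldb\cdot\nabla v$, the two equations subtract to $\int_\Omega ug\,dx=0$. Arbitrariness of $g$ forces $u\equiv 0$.

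\emph{Part (ii).} Uniqueness follows from (i) applied to the difference of two $\oSob{1}{2-}$-solutions. For existence, the strategy is to realize $u$ as the Riesz representative of the functional $g\mapsto\langle f,Tg\rangle$. For each $q'\in[r,2)$, set $\tilde q:=(q')'\in(2,r']$ and apply Theorem~\ref{thm:main-theorem-1} with this exponent to obtain $T_{\tilde q}\colon \Sob{-1}{\tilde q}(\Omega)\to \oSob{1}{2+\tilde\varepsilon}(\Omega)$ for some $\tilde\varepsilon\in(0,\tilde q-2)$. The pairing $\langle f,T_{\tilde q}g\rangle$ is well-defined in the $\Sob{-1}{(2+\tilde\varepsilon)'}$--$\oSob{1}{2+\tilde\varepsilon}$ duality because $f\in \Sob{-1}{2-}(\Omega)\subset \Sob{-1}{(2+\tilde\varepsilon)'}(\Omega)$ (using $(2+\tilde\varepsilon)'<2$), and Theorem~\ref{thm:main-theorem-1} yields
\[
|\langle f,T_{\tilde q}g\rangle|\le C\,\|f\|_{\Sob{-1}{(2+\tilde\varepsilon)'}(\Omega)}\,\|g\|_{\Sob{-1}{\tilde q}(\Omega)}.
\]
By reflexivity of $\Sob{-1}{\tilde q}(\Omega)$, there exists $u_{q'}\in \oSob{1}{q'}(\Omega)$ with $\int_\Omega u_{q'}g\,dx=\langle f,T_{\tilde q}g\rangle$ for every $g\in C_c^\infty(\Omega)$.

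To verify $u_{q'}$ solves~\eqref{eq:divergence-type} weakly, for $\psi\in C_c^\infty(\Omega)$ set $g:=-\Div(A^T\nabla\psi)-\boldb\cdot\nabla\psi$ as a distribution. One must check that $g\in \Sob{-1}{\tilde q}(\Omega)$: for $n\ge3$ by routine H\"older estimates on the compactly supported product $\boldb\cdot\nabla\psi\in \Lor{n}{\infty}(\Omega)$, and for $n=2$ by H\"older's inequality in Lorentz spaces combined with the Sobolev--Lorentz embedding $\oSob{1}{1}(\Omega)\hookrightarrow \Lor{2}{1}(\Omega)$ to bound the distribution on $\oSob{1}{\tilde q'}(\Omega)$. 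The uniqueness part of Theorem~\ref{thm:main-theorem-1} then identifies $\psi=T_{\tilde q}g$, and substituting into the defining identity for $u_{q'}$ followed by integration by parts (valid since $u_{q'}\in \oSob{1}{q'}(\Omega)$) yields the weak formulation of~\eqref{eq:divergence-type}. Varying $q'\in[r,2)$, part (i) forces the $u_{q'}$ to coincide; the common function $u$ lies in $\oSob{1}{q'}(\Omega)$ for every $q'<2$, so $u\in \oSob{1}{2-}(\Omega)$. The most delicate step is the borderline integrability check $g\in \Sob{-1}{\tilde q}(\Omega)$ in the $n=2$ case, where the weak-$\Leb{n}$ integrability of $\boldb$ is critical and forces use of the Sobolev--Lorentz embedding.
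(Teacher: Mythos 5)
Your proof is correct and follows essentially the same route as the paper: both parts are duality arguments built on the solution operator from Theorem~\ref{thm:main-theorem-1}, with the representative $u$ obtained via reflexivity of $\oSob{1}{p}(\Omega)$. The only cosmetic differences are that in part (i) you test against scalar $g\in C_c^\infty(\Omega)$ and conclude $u\equiv 0$ directly, whereas the paper tests against $\boldG\in C_c^\infty(\Omega)^n$ with right-hand side $\Div\boldG$, obtains $\nabla u=0$, and then invokes Poincar\'e; and in part (ii) you define $g$ as a distribution and identify $\psi=T_{\tilde q}g$ via the uniqueness clause, while the paper defines the same $g$ directly as a bounded functional on $\oSob{1}{p}(\Omega)$, making $g\in\Sob{-1}{p'}(\Omega)$ and $Lg=\phi$ immediate.
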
 
\begin{rem*}
It was shown in  Theorem 2.4 in \cite{KT20} that if $\Omega$ is bounded $C^{1,1}$-domain in $\mathbb{R}^n$ ($n\geq 3$), $A=I$, and $\boldb \in \Lor{n}{\infty}(\Omega)^n$ satisfies $\Div \boldb \geq 0$ in $\Omega$ and  $\Div \boldb \in \Lor{n/2}{\infty}(\Omega)$ in addition, then the problem \eqref{eq:divergence-type} has a unique weak solution in $\oSob{1}{n'-}(\Omega)$. Since $\Omega$ is bounded, it follows that $\oSob{1}{2-}(\Omega)\subset \oSob{1}{n'-}(\Omega)$ for $n\geq 2$. Hence, without assuming additional condition on $\Div \boldb$, we obtain a uniqueness result in small space $\oSob{1}{2-}(\Omega)$ than $\oSob{1}{n'-}(\Omega)$ for the problem \eqref{eq:divergence-type} when $n\geq 3$. When  $n=2$, Theorem \ref{thm:main-theorem-2} can be regarded as an extension of 2D version of Theorem 2.4 in \cite{KT20}. 
\end{rem*}

Let us summarize proofs of main theorems. To prove Theorem \ref{thm:main-theorem-1}, when $n\geq 3$, existence and uniqueness of weak solution $v\in \oSob{1}{2}(\Omega)$ of \eqref{eq:non-divergence-type} is easily proved by using the Lax-Milgram theorem. Then we improve $\nabla v\in \Leb{2+\varepsilon}(\Omega)$ for some $\varepsilon>0$ by proving   reverse H\"older estimates for the gradient of weak solutions which have  self-improving property, observed first by Gehring \cite{G73} and extended by Giaquinta-Modica \cite{GM} (see \cite{G83}) Then by the uniqueness of weak solution in $\oSob{1}{2}(\Omega)$, this implies the desired result when $n\geq 3$.   When $n=2$, further scrutiny is necessary since $\boldb \cdot \nabla v\notin \Sob{-1}{2}(\Omega)$ for $\boldb \in \Lor{2}{\infty}(\Omega)^n$ and $v\in \oSob{1}{2}(\Omega)$ in general. Instead,  we first prove an existence of weak solution $v_k\in \oSob{1}{q}(\Omega_k)$ of \eqref{eq:non-divergence-type} with smooth  $A_k$ and $\boldb_k$ on smooth domain $\Omega_k$ satisfying $\overline{\Omega}_k\subset \Omega$ (see Proposition \ref{prop:approximation}  for construction).   Then we show that  there exists $2<p<q$ such that $\nabla v_k\in \Leb{p}(\Omega)$ satisfying $\norm{v_k}{\Sob{1}{p}(\Omega_k)}\leq C$, where $p$ and $C$ are independent of $k$.  Then the existence of weak solution follows by a standard compactness argument. Uniqueness follows by a standard energy estimate and the Poincar\'e inequality.    A proof of Theorem \ref{thm:main-theorem-2} follows by a duality argument. 

The rest of the paper is organized as follows. Section \ref{sec:prelim} is devoted to giving and proving some preliminary results including a basic theory on Lorentz spaces and $\Sob{1}{2}$-estimates for weak solutions of \eqref{eq:divergence-type} and \eqref{eq:non-divergence-type} when $n\geq 3$. In Section \ref{sec:reverse-Holder}, we will prove reverse H\"older estimates which play a crucial role in the proof of Theorem \ref{thm:main-theorem-1}. We present proofs of  Theorems  \ref{thm:main-theorem-1} and \ref{thm:main-theorem-2} in Section \ref{sec:proofs}.

 The introductory section is completed by introducing several notations used in this article. As usual, $\mathbb{R}^n$ stands for the Euclidean space of points $x=(x_1,\dots,x_n)$. For $i=1,\dots,n$, and a function $u(x)$, we set 
 \[ u_{x_i}=D_i u =\frac{\partial u}{\partial x_i},\quad \nabla u=(u_{x_1},\dots,u_{x_n}).  \]
  We denote  the Euclidean ball of radius $r$ whose center is $x$ by $B_r(x)= \{ y \in \mathbb{R}^n : |y-x|<r\}$. For an open set $U\subset \mathbb{R}^n$, by $C_c^\infty(U)$, we denote the set of infinitely differentiable functions with compact support in $U$. For a Borel set $A \subset \mathbb{R}^n$, we use $|A|$ to denote its Lebesgue measure. If $|A|<\infty$, we write 
 \[   (f)_A=\fint_{A} f\myd{x}=\frac{1}{|A|} \int_A f \myd{x}.\]
 For $1<p<\infty$, we write $p'$ the H\"older conjugate to $p$ defined by $p/(p-1)$. For $1<p<n$, $p^*=np/(n-p)$  denotes the Sobolev conjugate of $p$. We denote by $X'$ the dual space of a Banach space $X$. The dual pairing of $X$ and $X'$ is denoted by $\action{\cdot,\cdot}_{X',X}$ or simply $\action{\cdot,\cdot}$.  Finally, by  $C=C(p_1,\dots,p_k)$, we denote a generic positive constant depending only on the parameters $p_1,\dots,p_k$. 

\subsection*{Acknowledgment}
The author would like to thank his advisor Prof. Hyunseok Kim  and Prof. {Doyoon} Kim for valuable comments.  Also, the author would like to thank 2d Lt.~Hyunjip Kim for translating Th\'eor\`eme 3.4 in Stampacchia \cite{S65} into English.  Finally, the author would like to thank anonymous referees for their careful reading of the manuscript and giving useful comments and suggestions to improve the paper.

\section{Preliminaries}\label{sec:prelim}

In this section, we collect some preliminary results including standard results on Lipschitz domains, Lorentz spaces, basic estimates, and $\Sob{1}{2}$-results for problems \eqref{eq:divergence-type} and \eqref{eq:non-divergence-type} when $n\geq 3$. 
 
\subsection{Lipschitz domain}

This subsection is based on the classical paper Verchota \cite{V84} (see also \cite{TOB13,T17,V82,MT99}). A bounded domain $\Omega$ in $\mathbb{R}^n$ is said to be \emph{Lipschitz} if for each $x\in \partial\Omega$, there exist a rectangular coordinate system, $(x',x_n)$, $x'\in \mathbb{R}^{n-1}$, $x_n \in \mathbb{R}$, a neighborhood $U=U(x)\subset \mathbb{R}^n$ containing $x$ and a Lipschitz continuous function $\varphi_x = \varphi : \mathbb{R}^{n-1}\rightarrow \mathbb{R}$ satisfying $\Omega \cap U = \{ (x',x_n) \in \mathbb{R}^n : x_n>\varphi(x')\}\cap U$. 

By a \emph{cylinder} $Z=Z_r(x)$, we mean an open, right circular, doubly truncated cylinder centered at $x\in \mathbb{R}^n$ with radius equal to $r$. We say that $Z=Z_r(x)$, $x\in \partial\Omega$ is a \emph{coordinate cylinder} for $\Omega$ if
\begin{enumerate}
\item[(i)] the bases of $Z$ are some positive distance from $\partial\Omega$;
\item[(ii)] there exists a rectangular coordinate system for $\mathbb{R}^n$, $(x',x_n)$, $x'\in\mathbb{R}^{n-1}$, $x_n \in \mathbb{R}$, with $x_n$-axis containing the axis of $Z$;
\item[(iii)] there exists a Lipschitz continuous function $\varphi =\varphi_Z : \mathbb{R}^{n-1}\rightarrow \mathbb{R}$ such that $Z\cap \Omega = Z\cap \{(y',y_n) \in \mathbb{R}^n : y_n>\varphi(x')\}$ and $x=(0,\varphi(0))$. 
\end{enumerate}
The pair $(Z,\varphi)$ is said to be a \emph{coordinate pair}.   Under this coordinate system, we may take our coordinate cylinder $Z_r(x)=\{ y\in \mathbb{R}^n : |y'-x'|<r,\, |y_n-x_n|<(1+M)r\}$, where $M$ is the Lipschitz constant of $\varphi$. 

Since $\partial\Omega$ is compact, we fix a covering of the boundary $\partial\Omega$ by coordinate cylinders $\{Z_{r_i}(x_i)\}_{i=1}^N$ so that for $1\leq i\leq N$, $(Z_i,\varphi_i)$ and $(Z_i^*,\varphi_i$) are coordinate pair, where   $Z^*_{r_i}(x_i)=Z_{100r_i \sqrt{1+M^2}}(x_i)$  and $M = \max_{1\leq i\leq N} \norm{\nabla \varphi_i}{\Leb{\infty}(\mathbb{R}^{n-1})}$.  Define $R_0=\min \{r_i : i=1,\dots,N\}$.  The totality of parameters 
\[   Z_k,\varphi_k, \norm{\nabla \varphi_k}{\Leb{\infty}(\mathbb{R}^{n-1})}, r_k, 1\leq k\leq N \]
are said to determine the \emph{Lipschitz character} of $\Omega$. 

It is easy to see that for each $x\in \partial \Omega$, we have  $B_{R_0}(x)\subset Z^*_{r_i}(x_i)$ for some $i$. From this, we can show that there exists a constant $c>0$ depending on the Lipschitz character of $\Omega$ such that
\begin{equation}\label{eq:Lipschitz-domain-exterior-ball}
 |B_{R}(x)\setminus \Omega|\geq c|B_R(x)|  
\end{equation}
for all $x\in \partial \Omega$ and $0<R\leq R_0$.

The following approximation of  bounded Lipschitz domain by smooth domains can be found in Verchota \cite[Theorem 1.12]{V84}, see Appendix of Verchota \cite{V82} for the proof.
\begin{thm}\label{thm:Verchota}
Let $\Omega$ be a bounded Lipschitz domain in $\mathbb{R}^n$, $n \geq 2$.   There exist  a covering of $\partial \Omega$ by coordinate cylinders $Z$ and  sequence of $C^\infty$-domains $\{\Omega_j\}$ with $\overline{\Omega_j}\subset \Omega$ such that given a coordinate pair $(Z,\varphi)$ of $\partial\Omega$, then for each $j$, $Z^*\cap \partial\Omega_j$ is given as the graph of a $C^\infty$-function $\varphi_j$   such that $\varphi_j\rightarrow \varphi$ uniformly, $\norm{\nabla \varphi_j}{\Leb{\infty}(\mathbb{R}^{n-1})}\leq \norm{\nabla \varphi}{\Leb{\infty}(\mathbb{R}^{n-1})}$, and $\nabla \varphi_j\rightarrow \nabla \varphi$ pointwise a.e. 
\end{thm}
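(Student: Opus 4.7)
The plan is to construct the interior smooth exhaustion $\{\Omega_j\}$ globally as the superlevel sets of a smoothed signed distance function, and then verify that in every given coordinate cylinder the resulting boundary is a graph with the claimed convergence properties. First, fix a finite covering $\{Z_i\}_{i=1}^N$ of $\partial\Omega$ by coordinate cylinders with associated Lipschitz defining functions $\varphi_i:\mathbb{R}^{n-1}\to\mathbb{R}$ and $M_i=\norm{\nabla\varphi_i}{\Leb{\infty}(\mathbb{R}^{n-1})}$, so that $\Omega\cap Z_i^* = \{(x',x_n): x_n>\varphi_i(x')\}\cap Z_i^*$ in the $i$-th local frame. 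As a warm-up that guides the choice of the smoothing parameters, I would mollify each $\varphi_i$ with a standard nonnegative mollifier $\eta_\epsilon$ on $\mathbb{R}^{n-1}$ and set $\widetilde{\varphi}_i^{(j)}=\eta_{\epsilon_j}*\varphi_i+2M_i\epsilon_j$ for $\epsilon_j\searrow 0$; Lipschitz continuity yields $\widetilde{\varphi}_i^{(j)}\ge \varphi_i+M_i\epsilon_j>\varphi_i$, a Young-type inequality yields $\norm{\nabla\widetilde{\varphi}_i^{(j)}}{\Leb{\infty}(\mathbb{R}^{n-1})}\le M_i$, and Rademacher's theorem combined with Lebesgue-point convergence of mollifications yields uniform convergence $\widetilde{\varphi}_i^{(j)}\to\varphi_i$ and a.e.\ gradient convergence. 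This produces local shifted smooth graphs strictly interior to $\Omega$ with all the required properties.

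To build a \emph{single} smooth domain $\Omega_j$, I would set $\rho(x)=\dist(x,\Omega^c)-\dist(x,\Omega)$, a $1$-Lipschitz function on $\mathbb{R}^n$ that is positive on $\Omega$ and negative on $\Omega^c$, and take $\rho_j=\eta_{\sigma_j}*\rho$ on $\mathbb{R}^n$ for $\sigma_j\ll\epsilon_j$, so that $\rho_j\in C^\infty$ and $\rho_j\to\rho$ uniformly. By Sard's theorem, choose a regular value $s_j>\norm{\rho_j-\rho}{\Leb{\infty}(\mathbb{R}^n)}$ of $\rho_j$ with $s_j\searrow 0$, and define $\Omega_j:=\{\rho_j>s_j\}$; then $\partial\Omega_j$ is a smooth closed hypersurface and $\overline{\Omega_j}\subset\{\rho>0\}=\Omega$. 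In each coordinate cylinder, the transversality $\partial_{x_n}\rho\ge(1+M_i^2)^{-1/2}$ a.e.\ on $\partial\Omega\cap Z_i$ (obtained from $|\nabla\rho|=1$ a.e.\ together with the graph structure) passes to the uniform estimate $\partial_{x_n}\rho_j\ge c_i>0$ on $Z_i^*$ for sufficiently small $\sigma_j$. The implicit function theorem applied to $\rho_j(x',x_n)=s_j$ then produces the smooth graph $x_n=\varphi_j(x')$ with $Z_i^*\cap\partial\Omega_j$ given exactly by it, and differentiating the implicit relation yields both the uniform convergence $\varphi_j\to\varphi_i$ and the pointwise a.e.\ convergence $\nabla\varphi_j\to\nabla\varphi_i$ from the corresponding properties of $\rho_j$ and $\nabla\rho_j$.

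The hardest step, and the one I expect to require real technical effort, is upgrading the gradient estimate to the sharp bound $\norm{\nabla\varphi_j}{\Leb{\infty}(\mathbb{R}^{n-1})}\le \norm{\nabla\varphi_i}{\Leb{\infty}(\mathbb{R}^{n-1})}$ with constant exactly $1$: the signed-distance route yields naturally only $(1+o(1))M_i$, because the algebraic identity $|\nabla\rho|=1$ a.e.\ degrades under convolution. To obtain the sharp bound I would replace the isotropic mollification near the boundary by a convolution adapted to a fixed interior cone direction in each cylinder -- roughly, $\eta_{\sigma_j}$ is chosen with support inside the interior dual cone of the local Lipschitz graph -- so that the mollified graph is automatically $M_i$-Lipschitz while still lying strictly above the original graph. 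Globalizing this cone-adapted construction with a partition of unity subordinate to $\{Z_i^*\}$ and checking compatibility on overlaps is the subtle combinatorial core of Verchota's original appendix argument, and is where the bulk of the work lies.
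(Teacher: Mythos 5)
The paper does not contain a proof of Theorem~\ref{thm:Verchota}; it is stated as a citation to Verchota \cite{V84}, with the proof deferred to the appendix of \cite{V82}, so there is no in-paper argument to compare against and I evaluate your proposal on its own terms. The signed-distance plan (mollify $\rho$, take a superlevel set $\{\rho_j>s_j\}$, identify the boundary locally as a graph via the implicit function theorem) is sound, but you have misidentified the difficulty: the sharp bound $\norm{\nabla\varphi_j}{L^\infty}\le M$ falls out of the signed-distance route with no cone-adapted mollification and no partition of unity. Mollification preserves precisely the two inequalities you need, namely $|\nabla\rho_j|\le 1$ (because $\rho$ is $1$-Lipschitz) and $\partial_{x_n}\rho_j\ge(1+M^2)^{-1/2}$ (once the same a.e.\ lower bound is known for $\partial_{x_n}\rho$). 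Differentiating $\rho_j(x',\varphi_j(x'))=s_j$ gives $\nabla'\varphi_j=-\nabla'\rho_j/\partial_{x_n}\rho_j$, hence
\[
|\nabla'\varphi_j|^2\le\frac{1-(\partial_{x_n}\rho_j)^2}{(\partial_{x_n}\rho_j)^2}=\frac{1}{(\partial_{x_n}\rho_j)^2}-1\le(1+M^2)-1=M^2.
\]
The ``degradation'' of $|\nabla\rho|=1$ to $|\nabla\rho_j|\le 1$ under convolution works in your favor, not against you, so your last paragraph is chasing a phantom problem and would in fact introduce real complications (partition-of-unity gluing does not preserve Lipschitz constants).

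The genuine gap is the transversality bound itself. The claim ``$\partial_{x_n}\rho\ge(1+M_i^2)^{-1/2}$ a.e.\ on $\partial\Omega\cap Z_i$'' is vacuous since $\partial\Omega$ is a null set, and the parenthetical ``from $|\nabla\rho|=1$ together with the graph structure'' does not by itself produce the bound with the sharp constant, which is exactly what the arithmetic above requires. You need the inequality a.e.\ in a full two-sided tube around $\partial\Omega\cap Z_i$, and that requires the interior/exterior cone condition: if $x\in\Omega\cap Z_i$ is near $\partial\Omega$ and $z\in\partial\Omega$ is its nearest point, then $B_{|x-z|}(x)\subset\Omega$ while the downward cone $z+\{v:v_n\le -M_i|v'|\}$ lies in $\Omega^c$; their disjointness forces the unit vector $\nabla\rho(x)=(x-z)/|x-z|$ into the polar cone $\{w:M_i\,w_n\ge|w'|\}$, which is precisely $\partial_{x_n}\rho(x)\ge(1+M_i^2)^{-1/2}$. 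The symmetric argument with $-\dist(\cdot,\Omega)$ handles $x\in\Omega^c$. Once this lemma is in place the rest of your outline closes; you can also drop the appeal to Sard, since the uniform transversality already makes every small positive level of $\rho_j$ regular in the tube where $\{\rho_j=s_j\}$ can live.
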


\subsection{Lorentz spaces and basic estimates for drifts terms} 

Let $\Omega$ be any domain in $\mathbb{R}^n$, $n\geq 2$.  For a measurable function $f: \Omega \rightarrow \mathbb{R}$, the \emph{distribution function} of $f$ is defined by 
\[    \mu_f(t) = |\{ x\in \Omega : |f(x)|>t \}|\quad (t\geq 0). \] 
The decreasing rearrangement of $f$, denoted by $f^*(t)$, is defined by 
\[  f^*(t) = \inf \{ \alpha \geq 0 : \mu_f(\alpha)\leq t\}. \]
For $1<p< \infty$ and $1\leq q\leq \infty$,  we define 
\[   \norm{f}{\Lor{p}{q}(\Omega)}  = \begin{cases}
 \left(\int_0^\infty \left(t^{1/p} f^*(t) \right)^q \frac{dt}{t} \right)^{1/q} & \quad \text{if } q<\infty, \\
 \sup_{t>0} t^{1/p} f^*(t) & \quad \text{if } q=\infty.
\end{cases} \]
The set of all $f:\Omega\rightarrow \mathbb{R}$ with $\norm{f}{\Lor{p}{q}(\Omega)}<\infty$ is denoted by $\Lor{p}{q}(\Omega)$ and is called the \emph{Lorentz space} with indices $p$ and $q$.  

The following H\"older inequality and refined Sobolev inequality in Lorentz spaces can be found in standard literatures (see e.g.\ \cite{AF,KKP15} and references therein). 
\begin{prop}\label{prop:Holder-Sobolev}\leavevmode
\begin{enumerate}
    \item[\rm (i)] Let $f\in \Lor{p_1}{q_1}(\Omega)$ and $g\in \Lor{p_2}{q_2}(\Omega)$, where $1< p_1,p_2<\infty$ satisfy $1/p=1/p_1+1/p_2\leq 1$. Assume further that $1/q_1+1/q_2\geq 1$ if $q=1$. Then 
    \[   fg\in \Lor{p}{q}(\Omega)\quad \text{and}\quad \norm{fg}{\Lor{p}{q}(\Omega)} \leq C(p)\norm{f}{\Lor{p_1}{q_1}(\Omega)} \norm{g}{\Lor{p_2}{q_2}(\Omega)} \]
    for any $q\geq 1$ with $1/q_1+1/q_2\geq 1/q$. 
    \item[\rm (ii)] Let $1\leq p<n$. Then for every $u\in \Sob{1}{p}(\mathbb{R}^n)$, we have 
    \[  u\in \Lor{p^*}{p}(\mathbb{R}^n)\quad \text{and}\quad \norm{u}{\Lor{p^*}{p}(\mathbb{R}^n)} \leq C(n,p)\norm{\nabla u}{\Leb{p}(\mathbb{R}^n)}.  \]
\end{enumerate}
\end{prop}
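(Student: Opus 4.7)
\smallskip

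\textbf{Plan of proof.} Both statements are classical, so the plan is to reduce them to the standard real-variable machinery of decreasing rearrangements.

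For part (i), the natural route is through O'Neil's rearrangement inequality. First I would record the pointwise estimate
\[ (fg)^*(t_1+t_2) \leq f^*(t_1)\,g^*(t_2), \]
which follows from the distribution-function inequality $\mu_{fg}(st) \leq \mu_f(s)+\mu_g(t)$. Setting $t_1=t_2=t/2$ gives $(fg)^*(t) \leq f^*(t/2)\,g^*(t/2)$. With this, the bound
\[ \norm{fg}{\Lor{p}{q}(\Omega)} \leq C(p)\,\norm{f}{\Lor{p_1}{q_1}(\Omega)}\,\norm{g}{\Lor{p_2}{q_2}(\Omega)} \]
reduces to the one-dimensional inequality
\[ \left(\int_0^\infty (t^{1/p} f^*(t) g^*(t))^q \tfrac{dt}{t}\right)^{1/q} \leq C\,\prod_{i=1}^{2}\left(\int_0^\infty (t^{1/p_i} f_i^*(t))^{q_i}\tfrac{dt}{t}\right)^{1/q_i}, \]
which is just H\"older's inequality on $(0,\infty)$ with measure $dt/t$, using $1/p=1/p_1+1/p_2$ and $1/q_1+1/q_2\geq 1/q$. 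The endpoint $q=1$ (when $1/q_1+1/q_2\geq 1$ is required) is the only subtle case; there one uses the sub-additivity of the rearrangement plus Fubini to absorb the extra logarithmic factor.

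For part (ii), I would argue by dyadic truncation combined with the classical Sobolev inequality $\norm{w}{\Leb{p^*}(\mathbb{R}^n)}\leq C(n,p)\norm{\nabla w}{\Leb{p}(\mathbb{R}^n)}$. For $k\in\mathbb{Z}$ set
\[ u_k(x) = \min\{\max(|u(x)|-2^{k-1},0),\,2^{k-1}\}. \]
Then $u_k\in \Sob{1}{p}(\mathbb{R}^n)$, $\nabla u_k=\nabla u\cdot \chi_{\{2^{k-1}<|u|\leq 2^k\}}$ (a.e.), and $u_k\geq 2^{k-1}\chi_{\{|u|\geq 2^k\}}$. Applying the Sobolev inequality to $u_k$ yields
\[ 2^{kp}\,\mu_u(2^k)^{p/p^*} \leq C(n,p)\int_{\{2^{k-1}<|u|\leq 2^k\}} |\nabla u|^p\myd{x}. \]
Summing in $k\in\mathbb{Z}$ gives $\sum_k 2^{kp}\mu_u(2^k)^{p/p^*}\leq C\norm{\nabla u}{\Leb{p}(\mathbb{R}^n)}^p$, since the sets $\{2^{k-1}<|u|\leq 2^k\}$ are pairwise disjoint. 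Finally, the identity
\[ \norm{u}{\Lor{p^*}{p}(\mathbb{R}^n)}^p = p^*\int_0^\infty s^{p-1}\mu_u(s)^{p/p^*}\myd{s}, \]
obtained from the definition via Fubini (a layer-cake computation), plus the dyadic comparison $\int_0^\infty s^{p-1}\mu_u(s)^{p/p^*}\myd{s}\asymp \sum_k 2^{kp}\mu_u(2^k)^{p/p^*}$ (valid because $\mu_u$ is non-increasing), closes the argument.

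The main obstacle is less in any single computational step than in organizing the bookkeeping: (a) in part (i), verifying the endpoint constraint $1/q_1+1/q_2\geq 1$ when $q=1$ so that H\"older can be applied on $(0,\infty,dt/t)$; and (b) in part (ii), justifying the termwise application of the classical Sobolev inequality to $u_k$, which requires checking that $u_k\in \Sob{1}{p}(\mathbb{R}^n)$ with the stated gradient formula, and treating $p=1$ via a density/approximation step or by invoking the isoperimetric form $\norm{w}{\Leb{n/(n-1)}}\leq C\norm{\nabla w}{\Leb{1}}$ directly on the level sets. Both issues are standard but have to be handled carefully.
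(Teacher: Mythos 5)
The paper does not prove Proposition~\ref{prop:Holder-Sobolev}: both parts are cited as standard facts from the references, so your proposal supplies a proof the paper deliberately omits. Your argument for part (ii) --- dyadic truncations $u_k$, termwise application of the classical Sobolev inequality, and the layer-cake identity $\norm{u}{\Lor{p^*}{p}}^p = p^*\int_0^\infty s^{p-1}\mu_u(s)^{p/p^*}\myd{s}$ --- is the standard route and is essentially complete once one verifies the chain-rule formula for $\nabla u_k$ (a Lipschitz composition with $|u|$, which is routine); the dyadic comparison with the continuous integral is justified by the monotonicity of $\mu_u$.

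For part (i) there is a genuine, if small, gap. The rearrangement estimate $(fg)^*(t)\leq f^*(t/2)\,g^*(t/2)$, obtained from $\mu_{fg}(st)\leq\mu_f(s)+\mu_g(t)$, and the reduction to a one-variable inequality on $(0,\infty;\,dt/t)$ are correct, but the step you describe as ``just H\"older's inequality'' only works when $1/q_1+1/q_2=1/q$ exactly: bounding $\int_0^\infty\bigl(t^{1/p_1}f^*(t)\bigr)^q\bigl(t^{1/p_2}g^*(t)\bigr)^q\,\tfrac{dt}{t}$ via H\"older with conjugate exponents $q_1/q$ and $q_2/q$ requires $q/q_1+q/q_2=1$. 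When $1/q_1+1/q_2>1/q$ strictly, H\"older does not apply with those exponents. The repair is to first apply H\"older with the exact index $q_0$ given by $1/q_0=1/q_1+1/q_2$, concluding $fg\in\Lor{p}{q_0}(\Omega)$, and then invoke the monotonicity of Lorentz quasi-norms in the second index, $\Lor{p}{q_0}(\Omega)\hookrightarrow\Lor{p}{q}(\Omega)$ for $q_0\leq q$. Your remark that the ``endpoint $q=1$'' is the subtle case and produces ``an extra logarithmic factor'' is not an accurate diagnosis: $q=1$ is not special here and no logarithmic loss arises; the missing ingredient (the second-index embedding) is needed for \emph{every} pair $(q_1,q_2)$ with $1/q_1+1/q_2>1/q$. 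One further caveat: the statement claims a constant depending on $p$ alone, which a naive application of the second-index embedding will not give (its constant depends on $q_0$ and $q$); to obtain $C(p)$ one should quote O'Neil's theorem directly rather than reconstruct it, but this affects the sharpness of the constant, not the validity of the conclusion.
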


 Using H\"older's inequality and refined Sobolev's inequality in Lorentz spaces, we can prove the following basic estimate. 
\begin{prop}\label{prop:basic-estimates}
Let $\Omega$ be a bounded Lipschitz domain in $\mathbb{R}^n$, $n\geq 2$ and let $\boldb \in \Lor{n}{\infty}(\Omega)^n$.
\begin{enumerate}
  \item[\rm (i)] Suppose that the pair $(p,q)$ satisfies either 
    \begin{enumerate}
    \item[\rm (a)] $1<p<n$ and $q=p'$ or 
    \item[\rm (b)] $p=n$ and $q>n$. 
    \end{enumerate}
    Then there exists a constant $C=C(n,p,q,\Omega)>0$ such that
    \begin{equation}\label{eq:trilinear-estimate}
    \int_\Omega |(u\boldb)\cdot \nabla v|\myd{x}\leq C \norm{\boldb}{\Lor{n}{\infty}(\Omega)}\norm{u}{\Sob{1}{p}(\Omega)}\norm{v}{\Sob{1}{q}(\Omega)} 
        \end{equation} 
      for every  $u\in \Sob{1}{p}(\Omega)$ and $v\in \Sob{1}{q}(\Omega)$. 
  \item[\rm (ii)] Suppose in addition that $\Div \boldb \geq 0$ in $\Omega$. If $p>n$, then 
  \begin{equation}\label{eq:coercivity}
    -\int_\Omega u\boldb \cdot \nabla u \myd{x}\geq 0 
\end{equation}  
holds for all $u\in \oSob{1}{p}(\Omega)$. If $n\geq 3$, then \eqref{eq:coercivity} holds for all $u\in \oSob{1}{2}(\Omega)$. 
\end{enumerate} 
\end{prop}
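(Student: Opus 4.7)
The plan is to derive part (i) by combining the Lorentz H\"older inequality of Proposition \ref{prop:Holder-Sobolev}(i) with the refined Sobolev embedding in Proposition \ref{prop:Holder-Sobolev}(ii), applied to an extension of $u$ to $\mathbb{R}^n$ via a bounded extension operator (available since $\Omega$ is Lipschitz). For case (a) with $1<p<n$ and $q=p'$, Proposition \ref{prop:Holder-Sobolev}(ii) embeds $u\in \Sob{1}{p}(\Omega)$ into $\Lor{p^*}{p}(\Omega)$; Lorentz H\"older against $\boldb\in \Lor{n}{\infty}(\Omega)^n$ then gives $u\boldb\in \Leb{p}(\Omega)^n$ (the first-index arithmetic being $1/p^*+1/n=1/p$ and the second-index arithmetic $1/p+1/\infty=1/p$), and pairing with $\nabla v\in \Leb{p'}(\Omega)$ via ordinary H\"older closes the estimate. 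For case (b) with $p=n$ and $q>n$, I set $\tilde p:=q'\in (1,n')$ and use the bounded inclusion $\Sob{1}{n}(\Omega)\subset \Sob{1}{\tilde p}(\Omega)$; the refined Sobolev inequality then places $u\in \Lor{\tilde p^*}{\tilde p}(\Omega)$, and two applications of Lorentz H\"older produce $u\boldb\cdot\nabla v\in \Lor{1}{1}=\Leb{1}(\Omega)$. The critical identities are $1/\tilde p^{*}+1/n+1/q=1$ and $1/\tilde p+1/q=1/q'+1/q=1$, which saturate the borderline second-index hypothesis $1/q_1+1/q_2\geq 1$ from Proposition \ref{prop:Holder-Sobolev}(i).

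For part (ii), the driving algebraic identity is the chain rule $u\boldb\cdot\nabla u=\tfrac{1}{2}\boldb\cdot\nabla(u^2)$, which reduces \eqref{eq:coercivity} to $-\int_\Omega \boldb\cdot\nabla(u^2)\myd{x}\geq 0$. I approximate $u\in \oSob{1}{p}(\Omega)$ by a sequence $u_k\in C_c^\infty(\Omega)$ with $u_k\to u$ in $\Sob{1}{p}(\Omega)$; since $u_k^2\in C_c^\infty(\Omega)$ is non-negative, the hypothesis $\Div\boldb\geq 0$ yields $-\int_\Omega \boldb\cdot\nabla(u_k^2)\myd{x}\geq 0$. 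The proof then reduces to verifying the convergence $\boldb\cdot\nabla(u_k^2)\to \boldb\cdot\nabla(u^2)$ in $\Leb{1}(\Omega)$, which I obtain by showing $\nabla(u_k^2)\to \nabla(u^2)$ in $\Lor{n'}{1}(\Omega)$ and applying Lorentz H\"older against $\boldb\in \Lor{n}{\infty}(\Omega)^n$ (the indices $1/n+1/n'=1$ and $1/\infty+1/1=1$ matching exactly).

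When $p>n$, Morrey's embedding gives $u\in C(\overline{\Omega})$ and continuous embedding $\Sob{1}{p}(\Omega)\hookrightarrow C(\overline{\Omega})$, so $u_k\to u$ uniformly; splitting $\nabla(u_k^2-u^2)=2(u_k-u)\nabla u_k+2u\nabla(u_k-u)$ then gives $\nabla(u_k^2)\to \nabla(u^2)$ in $\Leb{p}(\Omega)$, and on the bounded domain $\Omega$ this embeds continuously into $\Lor{n'}{1}(\Omega)$ since $p>n>n'$. When $n\geq 3$ and $p=2$, I use the same decomposition but estimate directly in $\Lor{n'}{1}(\Omega)$ via Lorentz H\"older: $\|(u_k-u)\nabla u_k\|_{\Lor{n'}{1}(\Omega)}\leq C\|u_k-u\|_{\Lor{2^*}{2}(\Omega)}\|\nabla u_k\|_{\Leb{2}(\Omega)}$ and similarly for the second piece. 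The refined Sobolev inequality converts $u_k\to u$ in $\Sob{1}{2}(\Omega)$ into convergence in $\Lor{2^*}{2}(\Omega)$, and $\|\nabla u_k\|_{\Leb{2}(\Omega)}$ stays bounded, so both terms vanish in the limit.

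The main technical care lies in the borderline second-index bookkeeping: in case (b) of part (i) and throughout part (ii) the Lorentz indices saturate the allowed equality (e.g.\ $1/\tilde p+1/q=1$ in case (b); $1/n+1/n'=1$ and $1/2+1/2=1$ in part (ii)), so a coarser argument replacing Lorentz spaces with plain $\Leb{a}$ spaces would miss $\Leb{1}$ by a logarithmic factor. Verifying at each step that the second-index arithmetic respects $1/q_1+1/q_2\geq 1$ whenever the resulting exponent drops to $1$ is the principal obstacle.
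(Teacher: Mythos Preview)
Your proof is correct. The main differences from the paper lie in case (b) of part (i) and in part (ii).

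For (i)(b), the paper avoids the borderline Lorentz indices altogether: it picks $1<s<n'$ and $r>nq/(q-n)$ with $1/q+1/s+1/r=1$, then uses the embeddings $\Lor{n}{\infty}(\Omega)\hookrightarrow \Leb{s}(\Omega)$ (valid since $s<n$ and $\Omega$ is bounded) and $\Sob{1}{n}(\Omega)\hookrightarrow \Leb{r}(\Omega)$ (ordinary Sobolev, $r<\infty$), followed by the classical three-term H\"older inequality. Your argument instead stays exactly at the borderline via the refined Sobolev inequality and Lorentz H\"older. Both are clean; the paper's route sidesteps the second-index bookkeeping entirely, while yours shows more transparently why $q>n$ is the threshold (it makes $\tilde p=q'<n'$, so the refined Sobolev embedding is available).

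For (ii), the paper also argues differently. When $p>n$ it invokes part (i)(b) --- placing $u$ in $\Sob{1}{n}(\Omega)\supset \Sob{1}{p}(\Omega)$ in the first slot and $u\in \Sob{1}{p}(\Omega)$ with $p>n$ in the second --- to conclude that $(u,v)\mapsto\int_\Omega (u\boldb)\cdot\nabla v\,dx$ is jointly continuous on $\Sob{1}{p}\times \Sob{1}{p}$, and then passes to the limit along $u_k\to u$. For $n\geq 3$ and $u\in \oSob{1}{2}(\Omega)$ the paper simply cites \cite[Lemma~3.6]{KT20}. Your direct route via $\nabla(u_k^2)\to\nabla(u^2)$ in $\Lor{n'}{1}(\Omega)$ is more self-contained and in effect reproduces the content of that external lemma; the index identities $1/2^*+1/2=1/n'$ and $1/2+1/2=1$ are precisely what make the $n\geq 3$, $p=2$ case close. (One small wording fix: in the $p>n$ step you write ``$p>n>n'$'', but when $n=2$ one has $n=n'$; the embedding $\Leb{p}(\Omega)\hookrightarrow \Lor{n'}{1}(\Omega)$ still holds since $p>n'$.)
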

\begin{proof}
(i) The case (a) can be found in \cite[Lemma 3.5]{KT20}. Suppose that $(p,q)$ satisfies (b). Choose $(s,r)$ satisfying $1<s<n'$ and  $nq/(q-n)<r<\infty$ so that  
\[  1=\frac{1}{q}+\frac{1}{s}+\frac{1}{r}.  \]
Since $\Omega$ is bounded and $n'\leq n$, the space $\Lor{n}{\infty}(\Omega)$ is continuously embedded into $\Leb{s}(\Omega)$. Also, by the Sobolev embedding theorem, $\Sob{1}{n}(\Omega)$ is continuously embedded into $\Leb{r}(\Omega)$. Hence by the H\"older inequality, we have 
\begin{align*} 
  \int_\Omega |(u\boldb)\cdot \nabla v|\myd{x}&\leq \norm{\boldb}{\Leb{s}(\Omega)}\norm{u}{\Leb{r}(\Omega)}\norm{\nabla v}{\Leb{q}(\Omega)}\\
  &\leq C \norm{\boldb}{\Lor{n}{\infty}(\Omega)}\norm{u}{\Sob{1}{n}(\Omega)}\norm{v}{\Sob{1}{q}(\Omega)}.  
\end{align*}
This completes the proof of (i).

(ii) Suppose that $p>n$. Since $\Div \boldb \geq 0$ in $\Omega$, it follows that 
\[ 
  -\int_\Omega (u\boldb)\cdot \nabla u \myd{x}=-\int_\Omega \boldb\cdot \nabla \left(\frac{1}{2} u^2 \right)\geq 0  
\]
for all $u\in C_c^\infty(\Omega)$. Since $C_c^\infty(\Omega)$ is dense in $\oSob{1}{p}(\Omega)$, it follows that for each $u\in \oSob{1}{p}(\Omega)$, there exists a sequence of functions $\{u_k\}$ in $C_c^\infty(\Omega)$ such that $u_k \rightarrow u$ in $\Sob{1}{p}(\Omega)$. Hence by (i), we have
\begin{align*}
  -\int_\Omega (u\boldb)\cdot \nabla u \myd{x}&=-\lim_{k\rightarrow \infty} \int_\Omega (u_k \boldb)\cdot \nabla u_k \myd{x}\geq 0.
\end{align*}
This completes the proof when $p>n$. {If $n\geq 3$ and $u\in \oSob{1}{2}(\Omega)$,  then \eqref{eq:coercivity} follows from}  \cite[Lemma 3.6]{KT20}. This completes the proof of Proposition \ref{prop:basic-estimates}. 
\end{proof}

The following result can be proved by using the Riesz representation theorem on $\Leb{p}(\Omega)$ and the classical Calder\'on-Zygmund estimate, see \cite[Lemma 3.9]{KT20} for the proof. 
\begin{prop}\label{prop:Riesz-representation}
Let $\Omega$ be a bounded domain in $\mathbb{R}^n$, $n\geq 2$, and let $1<p<\infty$. If $f\in \Sob{-1}{p}(\Omega)$, then there exists $\boldF \in \Leb{p}(\Omega)^n$ such that 
\[   \Div \boldF = f\quad \text{in } \Omega\quad \text{and}\quad \norm{\boldF}{\Leb{p}(\Omega)}\leq C \norm{f}{\Sob{-1}{p}(\Omega)}\]
for some constant $C=C(n,p,\Omega)>0$. 
\end{prop}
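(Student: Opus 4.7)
The goal is to realize any $f \in \Sob{-1}{p}(\Omega) = (\oSob{1}{p'}(\Omega))'$ as the distributional divergence of an $\Leb{p}$ vector field on $\Omega$, with quantitative control. My plan is a Hahn-Banach / Riesz representation argument whose main input is the Poincar\'e inequality on bounded domains.

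First I would observe that, since $\Omega$ is bounded, the Poincar\'e inequality $\norm{u}{\Leb{p'}(\Omega)} \leq C(p,\Omega) \norm{\nabla u}{\Leb{p'}(\Omega)}$ holds for all $u \in \oSob{1}{p'}(\Omega)$. Consequently, the gradient operator $\nabla : \oSob{1}{p'}(\Omega) \to \Leb{p'}(\Omega)^n$ is injective, and its range $V := \nabla \oSob{1}{p'}(\Omega)$, equipped with the inherited $\Leb{p'}$-norm, is linearly homeomorphic to $\oSob{1}{p'}(\Omega)$. I would therefore define a linear functional $T_0 : V \to \mathbb{R}$ by $T_0(\nabla u) := -\action{f,u}$; well-definedness follows from the injectivity of $\nabla$, and the bound $|T_0(\nabla u)| \leq C \norm{f}{\Sob{-1}{p}(\Omega)} \norm{\nabla u}{\Leb{p'}(\Omega)}$ is a direct consequence of the Poincar\'e estimate.

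Next I would extend $T_0$ to a continuous linear functional $T$ on all of $\Leb{p'}(\Omega)^n$ via the Hahn-Banach theorem without increasing its norm. The Riesz representation theorem for $\Leb{p'}(\Omega)^n$ then produces a unique $\boldF \in \Leb{p}(\Omega)^n$ with $T(\boldG) = \int_\Omega \boldF \cdot \boldG \myd{x}$ for every $\boldG \in \Leb{p'}(\Omega)^n$, and the $\Leb{p}$-norm of $\boldF$ equals the operator norm of $T$. Specializing to $\boldG = \nabla \varphi$ with $\varphi \in C_c^\infty(\Omega)$ yields $\int_\Omega \boldF \cdot \nabla \varphi \myd{x} = -\action{f,\varphi}$, which is exactly the distributional identity $\Div \boldF = f$ in $\Omega$. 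The quantitative bound $\norm{\boldF}{\Leb{p}(\Omega)} \leq C \norm{f}{\Sob{-1}{p}(\Omega)}$ then follows by tracking the norm through Hahn-Banach and the Poincar\'e constant.

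The only delicate point is the Poincar\'e inequality on a general bounded domain (with no regularity assumed), which is standard for $\oSob{1}{p'}$ thanks to the diameter bound. An alternative route --- which is where the classical Calder\'on-Zygmund theory enters naturally --- would be to extend $f$ to some $\widetilde{f} \in \Sob{-1}{p}(\mathbb{R}^n)$, solve $-\Delta u = \widetilde{f}$ on $\mathbb{R}^n$ via the Newtonian potential, and set $\boldF = -\nabla u|_\Omega$; the global $\Leb{p}$-bound on $\nabla u$ then rests on the $\Leb{p}$-boundedness of the Riesz transforms, which is precisely the classical Calder\'on-Zygmund estimate.
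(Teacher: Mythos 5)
Your main argument is correct, and it takes a genuinely different route from the one the paper cites. The paper defers to \cite[Lemma 3.9]{KT20}, described as resting on ``the Riesz representation theorem on $\Leb{p}(\Omega)$ and the classical Calder\'on--Zygmund estimate,'' i.e.\ on singular-integral bounds (essentially your ``alternative route'' via the Newtonian potential and $\Leb{p}$-boundedness of Riesz transforms). You instead avoid all harmonic-analysis machinery: since $\Omega$ is bounded, the Friedrichs--Poincar\'e inequality makes $u \mapsto \nabla u$ a linear homeomorphism from $\oSob{1}{p'}(\Omega)$ onto its range $V \subset \Leb{p'}(\Omega)^n$; the functional $\nabla u \mapsto -\action{f,u}$ is then well-defined and bounded on $V$, Hahn--Banach extends it without increasing the norm, and $(\Leb{p'})' = \Leb{p}$ produces the representing field $\boldF$, with the divergence identity read off by testing against $\nabla\varphi$ for $\varphi \in C_c^\infty(\Omega)$. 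This is complete and, if anything, cleaner and more self-contained than the cited proof --- the only structural input it needs from $\Omega$ is boundedness, exactly matching the hypothesis of the proposition. What the Calder\'on--Zygmund route buys in exchange is an explicit (rather than Hahn--Banach) construction of $\boldF$ as $-\nabla u$ for a concrete potential $u$, which can be convenient when one later wants additional regularity or structure on $\boldF$; but be aware that your sketch of that alternative glosses over the nontrivial step of extending $f$ from $\Sob{-1}{p}(\Omega)$ to a suitable global class on $\mathbb{R}^n$ for which $(-\Delta)^{-1}$ is well behaved. None of this affects your primary argument, which stands on its own.
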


\subsection{Existence and uniqueness of weak solutions in $\oSob{1}{2}(\Omega)$ when $n\geq 3$}

When $n\geq 3$, existence and uniqueness of weak solutions in $\oSob{1}{2}(\Omega)$ for \eqref{eq:divergence-type} and \eqref{eq:non-divergence-type} are easily obtained by applying the Lax-Milgram theorem. 
\begin{prop}\label{prop:W12-estimates}
Let $\Omega$ be a bounded Lipschitz  domain in $\mathbb{R}^n$, $n\geq 3$. Suppose that  $\boldb \in \Lor{n}{\infty}(\Omega)^n$ satisfies $\Div \boldb \geq 0$ in $\Omega$. 
\begin{enumerate}
    \item[\rm (i)] For every $f\in \Sob{-1}{2}(\Omega)$, there exists a unique weak solution $u\in \oSob{1}{2}(\Omega)$ of \eqref{eq:divergence-type}. Moreover, we have 
    \[   \norm{u}{\Sob{1}{2}(\Omega)} \leq C \norm{f}{\Sob{-1}{2}(\Omega)} \]
    for some constant $C=C(n,\delta,\Omega)>0$. 
    \item[\rm (ii)] For every $g\in \Sob{-1}{2}(\Omega)$, there exists a unique weak solution $v\in \oSob{1}{2}(\Omega)$ of \eqref{eq:non-divergence-type}. Moreover, we have 
    \[   \norm{v}{\Sob{1}{2}(\Omega)} \leq C \norm{g}{\Sob{-1}{2}(\Omega)}\]
    for some constant $C=C(n,\delta,\Omega)>0$. 
\end{enumerate}
\end{prop}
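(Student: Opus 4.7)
The plan is a direct application of the Lax--Milgram theorem to each problem, using the tools assembled in Propositions \ref{prop:Holder-Sobolev} and \ref{prop:basic-estimates}. The whole proposition rests on the fact that for $n \geq 3$, the exponent pair $(p,q) = (2,2)$ satisfies case (a) of Proposition \ref{prop:basic-estimates}(i) (since $1 < 2 < n$ and $2 = 2'$), and that Proposition \ref{prop:basic-estimates}(ii) gives the sign condition $-\int_\Omega u\boldb \cdot \nabla u\myd{x} \geq 0$ for all $u \in \oSob{1}{2}(\Omega)$.

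For part (ii), I would define the bilinear form $B : \oSob{1}{2}(\Omega) \times \oSob{1}{2}(\Omega) \to \mathbb{R}$ by
\[
  B(v,\phi) = \int_\Omega (A^T \nabla v)\cdot \nabla \phi \myd{x} - \int_\Omega (\boldb \cdot \nabla v)\phi \myd{x}.
\]
Continuity is obtained by combining the pointwise bound $|A| \leq K$ with the trilinear estimate \eqref{eq:trilinear-estimate} applied to the lower-order term (choosing $u = \phi$, exchanging the roles of $u$ and $v$): both terms are bounded by a constant multiple of $\norm{v}{\Sob{1}{2}(\Omega)} \norm{\phi}{\Sob{1}{2}(\Omega)}$. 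For coercivity, uniform ellipticity \eqref{eq:uniformly-elliptic} yields $\int_\Omega (A^T \nabla v)\cdot \nabla v \myd{x} \geq \delta \norm{\nabla v}{\Leb{2}(\Omega)}^2$, while Proposition \ref{prop:basic-estimates}(ii) gives $-\int_\Omega (\boldb \cdot \nabla v)v \myd{x} \geq 0$. Together with the Poincar\'e inequality on $\oSob{1}{2}(\Omega)$ this produces $B(v,v) \geq c \norm{v}{\Sob{1}{2}(\Omega)}^2$ for some $c = c(n,\delta,\Omega) > 0$. Since $\phi \mapsto \action{g,\phi}$ is a bounded linear functional on $\oSob{1}{2}(\Omega)$ when $g \in \Sob{-1}{2}(\Omega)$, Lax--Milgram produces a unique $v \in \oSob{1}{2}(\Omega)$ with $B(v,\phi) = \action{g,\phi}$ for all $\phi \in \oSob{1}{2}(\Omega)$, and the quantitative estimate $\norm{v}{\Sob{1}{2}(\Omega)} \leq C \norm{g}{\Sob{-1}{2}(\Omega)}$ drops out. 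A standard density argument (using that $C_c^\infty(\Omega)$ is dense in $\oSob{1}{2}(\Omega)$, while $\boldb \cdot \nabla v \in \Leb{1}(\Omega)$ by \eqref{eq:trilinear-estimate} with constant test function) shows that this $v$ is a weak solution in the sense of \eqref{eq:weak-sol-nondiv}.

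For part (i), the argument is essentially identical with the bilinear form
\[
  \tilde{B}(u,\phi) = \int_\Omega (A \nabla u)\cdot \nabla \phi \myd{x} - \int_\Omega (u\boldb)\cdot \nabla \phi \myd{x}.
\]
Continuity again follows from \eqref{eq:trilinear-estimate}, and coercivity from ellipticity together with $-\int_\Omega (u\boldb) \cdot \nabla u \myd{x} \geq 0$ (Proposition \ref{prop:basic-estimates}(ii) applied directly). Applying Lax--Milgram to the functional $\phi \mapsto \action{f,\phi}$ yields the desired weak solution in $\oSob{1}{2}(\Omega)$.

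There is no real obstacle here: the dimension restriction $n \geq 3$ is what allows the trilinear estimate to close with $(p,q) = (2,2)$, and the sign condition on $\Div \boldb$ is what removes the lower-order term from the coercivity estimate. Both ingredients have already been isolated in the preliminaries, so the proof is essentially a bookkeeping exercise in verifying the hypotheses of Lax--Milgram.
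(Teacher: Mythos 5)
Your proof is correct and follows essentially the same route as the paper: apply Lax--Milgram to the relevant bilinear form, with boundedness coming from the trilinear estimate \eqref{eq:trilinear-estimate} in Proposition \ref{prop:basic-estimates}(i) applied at the exponent pair $(p,q)=(2,2)$ (available precisely because $n\geq 3$), and coercivity coming from uniform ellipticity, the sign condition $-\int_\Omega u\boldb\cdot\nabla u\,dx\geq 0$ from Proposition \ref{prop:basic-estimates}(ii), and the Poincar\'e inequality. The paper writes out the argument for part (i) with the bilinear form $\mathcal{B}(u,v)=\int_\Omega(A\nabla u-u\boldb)\cdot\nabla v\,dx$ and states that (ii) is similar; you spell out both bilinear forms explicitly, but they are the transposes of one another, so the content is identical.
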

\begin{proof}
Define $\mathcal{B}:\oSob{1}{2}(\Omega)\times \oSob{1}{2}(\Omega)\rightarrow \mathbb{R}$ by 
\[   \mathcal{B}(u,v)=\int_\Omega (A\nabla u -u\boldb) \cdot \nabla v \myd{x}. \]
Since $a^{ij}\in \Leb{\infty}(\mathbb{R}^n)$ and $\boldb \in \Lor{n}{\infty}(\Omega)^n$, the bilinear form $\mathcal{B}$ is bounded by Proposition \ref{prop:basic-estimates}. Moreover, it follows from Proposition \ref{prop:basic-estimates}, uniform ellipticity of $A$, and the Poincar\'e inequality that there exists a constant $C=C(n,\Omega)>0$ such that 
\[ \int_\Omega (A^T \nabla u-u\boldb) \cdot \nabla u \myd{x}\geq \delta \norm{\nabla u}{\Leb{2}(\Omega)}^2 \geq C \delta \norm{u}{\Sob{1}{2}(\Omega)}^2  \]
for all $u\in \oSob{1}{2}(\Omega)$. Hence by the Lax-Milgram theorem, for each $f\in \Sob{-1}{2}(\Omega)$, there exists a unique $u\in \oSob{1}{2}(\Omega)$ such that 
\[     \mathcal{B}(u,v)=\action{f,v}\quad \text{for all } v \in \oSob{1}{2}(\Omega).\]
Note that by the Poincar\'e inequality, we have
\[   \left|\action{f,u}\right|\leq \norm{f}{\Sob{-1}{2}(\Omega)}\norm{u}{\Sob{1}{2}(\Omega)} \leq C \norm{f}{\Sob{-1}{2}(\Omega)}\norm{\nabla u}{\Leb{2}(\Omega)} \]
for some constant $C=C(n,\Omega)>0$. Hence it follows from Young's inequality that 
\[    \delta\norm{\nabla u}{\Leb{2}(\Omega)} \leq \frac{\delta}{2}\norm{\nabla u}{\Leb{2}(\Omega)} + C\norm{f}{\Sob{-1}{2}(\Omega)}. \]
for some constant $C=C(n,\Omega,\delta)$. 
This proves (i). The proof of (ii) is similar so omitted. This completes the proof of Proposition \ref{prop:W12-estimates}. 
\end{proof}

\section{Reverse H\"older estimates} \label{sec:reverse-Holder}
In this section, we prove  reverse H\"older estimates for the gradient $\nabla v$ of a weak solution $v$ of the problem \eqref{eq:non-divergence-type}.  We use the following Poincar\'e-Sobolev's inequalities. 
\begin{prop}[Poincar\'e-Sobolev inequality]\label{prop:Poincare-Sobolev}
Let $R>0$ and let $1\leq p<n$. Then there exists a constant $C=C(n,p)>0$ such that 
\begin{equation}\label{eq:interior-Poincare}
    \norm{u-(u)_{B_R(x_0)}}{\Leb{p^*}(B_R(x_0))}\leq C \norm{\nabla u}{\Leb{p}(B_R(x_0))}\quad \text{for all } u\in \Sob{1}{p}(B_R(x_0)). 
    \end{equation}
Fix $\lambda>0$. Then there exists a constant $C=C(n,p,\lambda)>0$ such that 
\begin{equation}\label{eq:boundary-Poincare}
  \norm{u}{\Leb{p^*}(B_{R}(x_0))} \leq C \norm{\nabla u}{\Leb{p}(B_R(x_0))}  
\end{equation}
for all $u\in \Sob{1}{p}(B_R(x_0))$ satisfying 
\[   |\{ x\in B_R(x_0) : u(x)=0\}|\geq \lambda |B_R(x_0)|. \]
\end{prop}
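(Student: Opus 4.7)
The plan is to reduce both inequalities to the case $R=1$ by scaling, then assemble them from the standard Sobolev-Poincar\'e estimate on the unit ball.

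For (i), I would first rescale: setting $\tilde{u}(y) = u(x_0 + Ry)$ for $y \in B_1$, we have $\nabla \tilde{u}(y) = R \nabla u(x_0+Ry)$, and the two sides of \eqref{eq:interior-Poincare} transform with the same power of $R$, so it suffices to prove the case $x_0 = 0$, $R=1$. On $B_1$ I would invoke the classical Sobolev-Poincar\'e inequality: either (a) use a bounded extension operator $E : W^{1,p}(B_1) \to W^{1,p}(\mathbb{R}^n)$ (which exists since $B_1$ is a Lipschitz domain) applied to $u - (u)_{B_1}$, and then apply the usual Sobolev inequality $\|E v\|_{L^{p^*}(\mathbb{R}^n)} \leq C \|\nabla E v\|_{L^{p}(\mathbb{R}^n)}$; or (b) use the integral representation $u(x) - (u)_{B_1} = C\int_{B_1} (x-y)\cdot \nabla u(y)/|x-y|^n \, dy$ for a.e.\ $x$, which bounds $|u - (u)_{B_1}|$ by a truncated Riesz potential $I_1(|\nabla u| \chi_{B_1})$, and then apply the Hardy-Littlewood-Sobolev inequality.

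For (ii), I would again scale to $R=1$ and let $E := \{x\in B_1 : u(x)=0\}$, so $|E| \geq \lambda |B_1|$. Write $u = (u - (u)_{B_1}) + (u)_{B_1}$. Part (i) already controls the zero-mean piece in $L^{p^*}$. For the constant piece, the vanishing hypothesis gives
\begin{equation*}
|(u)_{B_1}| \cdot |E| = \left|\int_E \bigl((u)_{B_1} - u\bigr)\, dx\right| \leq \int_{B_1} |u - (u)_{B_1}|\, dx \leq |B_1|^{1-1/p^*} \|u - (u)_{B_1}\|_{L^{p^*}(B_1)},
\end{equation*}
after which part (i) bounds the right-hand side by $C\|\nabla u\|_{L^p(B_1)}$. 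Dividing by $|E|\geq \lambda |B_1|$ and then taking the $L^{p^*}(B_1)$-norm of the constant $(u)_{B_1}$ produces $\|(u)_{B_1}\|_{L^{p^*}(B_1)} \leq (C/\lambda)\|\nabla u\|_{L^p(B_1)}$. Combining the two estimates via the triangle inequality yields \eqref{eq:boundary-Poincare} at $R=1$, and rescaling restores the general $R$ with an $R$-independent constant (the homogeneity matches as in part (i)).

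The argument is essentially classical, so no step is a serious obstacle; the only items requiring care are (a) verifying that the scaling leaves both inequalities dimensionally consistent, so the constants are genuinely independent of $R$, and (b) handling the hypothesis ``$u$ vanishes on a set of measure at least $\lambda|B_R|$'' correctly in the sense that the set $E$ is measurable and the integral over $E$ is well-defined for $W^{1,p}$-functions (which is standard since the trace on a positive-measure subset of $B_R$ is given by the pointwise a.e.\ representative).
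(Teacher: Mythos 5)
Your argument is correct, and for part (ii) it takes a genuinely different route from the paper. The paper's proof of \eqref{eq:boundary-Poincare} is by contradiction and compactness: assuming the inequality fails, it extracts a normalized sequence $u_k$ with $\|u_k\|_{L^p(B_1)}=1$, $\|\nabla u_k\|_{L^p(B_1)}\to 0$, and $|\{u_k=0\}|\geq\lambda|B_1|$; Rellich--Kondrachov then produces a nonzero constant limit $u_0$, and the measure condition forces $\int_{\{u_k=0\}}|u_k-u_0|^p\geq |u_0|^p\lambda|B_1|>0$, contradicting strong $L^p$ convergence. You instead give a direct, constructive argument: decompose $u=(u-(u)_{B_1})+(u)_{B_1}$, control the mean-zero piece via \eqref{eq:interior-Poincare}, and estimate the constant piece by integrating $|(u)_{B_1}-u|$ over the vanishing set $E$, using $|E|\geq\lambda|B_1|$ and H\"older. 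Your approach is arguably preferable here because it yields an explicit constant (roughly $C(n,p)/\lambda$), avoids subsequence extraction and weak limits, and makes the homogeneity manifest so that the $R$-independence of the constant is visible at a glance; the paper's compactness argument, by contrast, establishes existence of a constant without tracking its dependence on $\lambda$ beyond qualitative dependence. Both proofs, after obtaining the $L^p$ (or mean) bound, finish the same way: feed it into the Sobolev embedding $W^{1,p}(B_1)\hookrightarrow L^{p^*}(B_1)$ and rescale.

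Two small points worth flagging. First, the paper's stated proof contains an apparent typo ("$\|u_0\|_{L^p(B_1)}=0$" where it should read "$\|u_0\|_{L^p(B_1)}=1$" so that $u_0$ is a \emph{nonzero} constant); your proof sidesteps such issues entirely. Second, in your estimate for the constant piece you should be slightly careful to write $\|(u)_{B_1}\|_{L^{p^*}(B_1)}=|(u)_{B_1}|\,|B_1|^{1/p^*}$ before applying the bound on $|(u)_{B_1}|$; you implicitly do this, but making it explicit keeps the constant bookkeeping clean.
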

\begin{proof}
Inequality \eqref{eq:interior-Poincare} can be found in standard literatures e.g. \cite{GT,E10}. Inequality \eqref{eq:boundary-Poincare} is  more or less standard result, but we give a proof for the sake of the completeness. By translation and scaling, it suffices to show the assertion when $R=1$ and $x_0$ is the origin. We first show that there exists a constant $C=C(n,p,\lambda)>0$ such that 
\begin{equation}\label{eq:boundary-Poincare-2}  
   \norm{u}{\Leb{p}(B_1)}\leq C \norm{\nabla u}{\Leb{p}(B_1)} 
\end{equation}
for all $u\in \Sob{1}{p}(B_1)$ with $|\{x\in B_1 : u(x)=0\}|\geq \lambda |B_1|$. Suppose on the contrary that inequality \eqref{eq:boundary-Poincare} is false. Then there exists a sequence $\{u_k\}$ such that $|\{ x\in B_1 : u_k(x)=0\}|\geq \lambda |B_1|$ but $\norm{u_k}{\Leb{p}(B_1)}=1$ and $\lim_{k\rightarrow \infty} \norm{\nabla u_k}{\Leb{p}(B_1)}=0$.  Then by Rellich-Kondrachov's theorem, we may assume that there exists $u_0 \in \Sob{1}{p}(B_1)$ such that $u_k\rightarrow u_0$ weakly in $\Sob{1}{p}(B_1)$ and strongly in $\Leb{p}(B_1)$. Since $\norm{\nabla u_0}{\Leb{p}(B_1)}=0$ and $\norm{u_0}{\Leb{p}(B_1)}=0$, it follows that $u_0$ is a nonzero constant a.e. on $B_1$. Hence 
\begin{align*}
0&=\lim_{k\rightarrow \infty} \int_{B_1} |u_k-u_0|^p \myd{x}\\
&\geq \lim_{k\rightarrow \infty} \int_{\{u_k=0\}} |u_k-u_0|^p \myd{x}\\
&\geq |u_0|^p \inf |\{u_k=0\}|>0.
\end{align*}
This contradiction leads us to prove \eqref{eq:boundary-Poincare-2}. Since $\Sob{1}{p}(B_1)$ is continuously embedded into $\Leb{p^*}(B_1)$, it follows that 
\[   \norm{u}{\Leb{p^*}(B_1)}\leq C(n,p)\left( \norm{u}{\Leb{p}(B_1)}+ \norm{\nabla u}{\Leb{p}(B_1)}\right) \leq C(n,p,\lambda) \norm{\nabla u}{\Leb{p}(B_1)}.\]
This completes the proof of Proposition \ref{prop:Poincare-Sobolev}. 
\end{proof}

From now on, we assume that $p_0=2$ if $n\geq 3$ and $p_0>2$ if $n=2$ in this section. We first derive an interior  reverse H\"older estimate for the gradient of weak solution. 
\begin{prop}\label{prop:interior-reverse-Holder}
Let $\Omega$ be a bounded Lipschitz domain in $\mathbb{R}^n$  {and $\boldG\in \Leb{2}(\Omega)^n$}. Assume that $A$ satisfies \eqref{eq:uniformly-elliptic}, $\boldb \in \Lor{n}{\infty}(\Omega)^n$, and $\Div \boldb \geq 0$ in $\Omega$. Suppose that  $v\in \oSob{1}{p_0}(\Omega)$ is a weak solution of \eqref{eq:non-divergence-type} {with $g=\Div \boldG$}.  If {$\frac{2n}{n+1}<r<2$}, then there exists a constant $C$ depending only on $\delta$, $K$, $n$ and $r$ such that 
\begin{align*}
  \int_{B_{R}(x_0)} |\nabla v|^2 \myd{x}&\leq C R^{n-2n/r} (1+\norm{\boldb}{\Lor{n}{\infty}(\Omega)}) \left(\int_{B_{2R}(x_0)}|\nabla v|^r \myd{x} \right)^{2/r}\\
  &\relphantom{=}+CR^{n/2-n/r} \left(\int_{B_{2R}(x_0)} |\nabla v|^r \myd{x} \right)^{1/r} \left(\int_{B_{2R}(x_0)} |\nabla v|^2 \myd{x} \right)^{1/2}\\
  &\relphantom{=}+\frac{1}{2}\int_{B_{2R}(x_0)} |\boldG|^2 \myd{x}+\left(\int_{B_{2R}(x_0)} |\boldG|^2\myd{x} \right)^{1/2}\left(\int_{B_{2R}(x_0)}|\nabla v|^2 \myd{x} \right)^{1/2}
\end{align*}
for all $B_{2R}(x_0)\subset \Omega$. 
\end{prop}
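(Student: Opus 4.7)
The proof follows a standard Caccioppoli-type argument with a nonstandard treatment of the drift. I would test the weak form of \eqref{eq:non-divergence-type} (with $g=\Div\boldG$) against
\[
  \phi := \eta^{2}(v-c), \qquad c := (v)_{B_{2R}(x_{0})},
\]
where $\eta\in C_{c}^{\infty}(B_{2R}(x_{0}))$ satisfies $\eta\equiv 1$ on $B_{R}(x_{0})$, $0\le\eta\le 1$, and $|\nabla\eta|\le C/R$. The function $\phi$ lies in $W^{1,2}_{0}(\Omega)$ with compact support in $\Omega$, so by density of $C_{c}^{\infty}(\Omega)$ in $W^{1,2}_{0}(\Omega)$ and the continuity of the drift trilinear form supplied by Proposition~\ref{prop:basic-estimates}(i), the weak formulation extends to this $\phi$. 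Expanding $\nabla\phi=\eta^{2}\nabla v+2\eta(v-c)\nabla\eta$ and invoking uniform ellipticity of $A^{T}$ give
\[
  \delta\int\eta^{2}|\nabla v|^{2}\,dx \le \Big|2\!\int\eta(v-c)A^{T}\nabla v\cdot\nabla\eta\,dx\Big| + \int\boldb\cdot\nabla v\,\eta^{2}(v-c)\,dx + \Big|\int\boldG\cdot\nabla\phi\,dx\Big|.
\]

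The drift term is the main novelty. Via the product rule
\[
  \eta^{2}(v-c)\,\boldb\cdot\nabla v = \tfrac{1}{2}\boldb\cdot\nabla\bigl(\eta^{2}(v-c)^{2}\bigr) - \eta(v-c)^{2}\,\boldb\cdot\nabla\eta,
\]
together with the observation that $\eta^{2}(v-c)^{2}\ge 0$ is compactly supported in $\Omega$ while $\Div\boldb$ is a non-negative distribution, hence a non-negative Radon measure on $\Omega$, distributional integration by parts yields
\[
  \int\boldb\cdot\nabla v\,\eta^{2}(v-c)\,dx \le -\int\eta(v-c)^{2}\boldb\cdot\nabla\eta\,dx \le \int|\boldb|\,\eta(v-c)^{2}|\nabla\eta|\,dx.
\]
I would then bound the last integral by H\"older's inequality in Lorentz spaces applied to the splitting $|\boldb|\cdot[\eta(v-c)]\cdot[(v-c)|\nabla\eta|]$ with exponent pairs $(n,\infty)$, $(2^{\ast},2)$ and $(2,2)$ (whose reciprocals sum to $1$), using the refined Sobolev embedding in Proposition~\ref{prop:Holder-Sobolev}(ii) applied to $\eta(v-c)\in W^{1,2}_{0}(\mathbb{R}^{n})$ to obtain $\|\eta(v-c)\|_{L^{2^{\ast},2}}\le C(\|\eta\nabla v\|_{L^{2}}+\|(v-c)\nabla\eta\|_{L^{2}})$. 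A subsequent Young inequality absorbs the $\|\eta\nabla v\|_{L^{2}}^{2}$ contribution into the left-hand side at the cost of a factor polynomial in $\|\boldb\|_{L^{n,\infty}}$ multiplying $\|(v-c)\nabla\eta\|_{L^{2}}^{2}$.

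The remaining ingredients are routine. The elliptic cross term is bounded by $2K\|\nabla v\|_{L^{2}(B_{2R})}\|(v-c)\nabla\eta\|_{L^{2}}$ without any Young step, which is precisely what produces the cross term $\|\nabla v\|_{L^{r}}\|\nabla v\|_{L^{2}}$ in the conclusion. The data integral is split as $\int\boldG\cdot\eta^{2}\nabla v+2\int\boldG\cdot\eta(v-c)\nabla\eta$; the first piece is kept as $\|\boldG\|_{L^{2}}\|\nabla v\|_{L^{2}(B_{2R})}$, while Young applied to the second gives $\tfrac{1}{2}\|\boldG\|_{L^{2}}^{2}+2\|(v-c)\nabla\eta\|_{L^{2}}^{2}$. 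Finally, the interior Poincar\'e-Sobolev inequality \eqref{eq:interior-Poincare} at exponent $r$, combined with $|\nabla\eta|\le C/R$ and the H\"older interpolation $\|v-c\|_{L^{2}(B_{2R})}\le|B_{2R}|^{1/2-1/r^{\ast}}\|v-c\|_{L^{r^{\ast}}(B_{2R})}$ on the ball, yields $\|(v-c)\nabla\eta\|_{L^{2}}\le CR^{(n-2n/r)/2}\|\nabla v\|_{L^{r}(B_{2R})}$; the hypothesis $r>\tfrac{2n}{n+1}$ ensures that the relevant exponents are in the admissible range and $r^{\ast}\ge 2$. Collecting all terms gives the claimed inequality.

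The most delicate step is the distributional integration-by-parts against the non-smooth function $\eta^{2}(v-c)^{2}$. The rigorous justification proceeds by representing $\Div\boldb$ as a non-negative Radon measure via Riesz-Markov, approximating $\eta^{2}(v-c)^{2}$ from below by a non-negative sequence in $C_{c}^{\infty}(\Omega)$ (truncate $v-c$ and then mollify), and checking convergence of the linear piece $\int\boldb\cdot\nabla(\eta^{2}(v-c)^{2})$ along the sequence, for which the Lorentz-H\"older estimate is again the right tool. A secondary subtlety is the case $n=2$, where $L^{2^{\ast},2}$ is no longer accessible via refined Sobolev; the hypothesis $v\in W^{1,p_{0}}_{0}$ with $p_{0}>2$ built into the proposition is precisely what lets an analogous H\"older splitting go through in that dimension.
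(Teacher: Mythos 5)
Your proposal uses the same test function $\phi=\eta^{2}(v-c)$ and the same core idea of invoking $\Div\boldb\geq0$ to discard the quadratic drift contribution $\tfrac12\int\boldb\cdot\nabla\bigl((\eta(v-c))^{2}\bigr)\leq0$, so the skeleton matches the paper's proof. Where you diverge is in how the remaining drift piece $\int|\boldb|\,\eta(v-c)^{2}|\nabla\eta|$ is estimated, and this is where a real gap appears. You split the integrand as $|\boldb|\cdot[\eta(v-c)]\cdot[(v-c)|\nabla\eta|]$ with Lorentz exponents $(n,\infty)$, $(2^{*},2)$, $(2,2)$ and then apply the refined Sobolev embedding $W^{1,2}_{0}(\mathbb{R}^{n})\hookrightarrow L^{2^{*},2}$. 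That embedding requires $2<n$, i.e.\ $n\geq3$; for $n=2$ it is simply false, and the proposition is asserted for all $n\geq2$. You flag this but merely assert that ``$p_{0}>2$ lets an analogous splitting go through,'' which is not a proof and does not in fact straightforwardly produce one: for $n=2$ and $p_{0}>2=n$ the Sobolev embedding lands in $L^{\infty}$/$C^{0,\alpha}$, not in a Lorentz space of the right homogeneity, so the three-factor split no longer closes with those exponents. The paper avoids this entirely by a different two-factor split: pair $|\boldb|\in L^{n,\infty}$ with $\eta(v-c)^{2}\in L^{n',1}$ (using $|\nabla\eta|\leq C/R$), note $\|(v-c)^{2}\|_{L^{n',1}}=\|v-c\|_{L^{2n',2}}^{2}$, and then H\"older down to $L^{r^{*}}$ followed by the Poincar\'e--Sobolev inequality at exponent $r<2<n$. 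This chain is valid for every $n\geq2$ and is exactly what the hypothesis $r>\tfrac{2n}{n+1}$ (equivalently $r^{*}\geq 2n'$) is calibrated for.

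Two smaller discrepancies are worth noting. First, your route produces a bound after Young absorption that is \emph{quadratic} in $\|\boldb\|_{L^{n,\infty}}$, whereas the stated inequality (and the paper's direct estimate of $\|v-c\|_{L^{2n',2}}^{2}$ by $\|\nabla v\|_{L^{r}}^{2}$ without any absorption against the drift) gives linear dependence $(1+\|\boldb\|_{L^{n,\infty}})$. This does not affect the Gehring step downstream, but it means your argument does not reproduce the proposition as written. Second, the Riesz--Markov/approximation machinery you invoke to justify the distributional sign is unnecessary: Proposition~\ref{prop:basic-estimates}(ii) already supplies $-\int\boldb\cdot\nabla w\,w\,dx\geq0$ for $w=\eta(v-c)\in W^{1,p_{0}}_{0}(\Omega)$ (with $p_{0}=2$ if $n\geq3$ and $p_{0}>n$ if $n=2$), which is the clean way the paper handles it; your measure-theoretic detour is correct in spirit but re-proves an available lemma.
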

\begin{proof}
By Proposition \ref{prop:basic-estimates}, a standard density argument enables us to show that  \eqref{eq:weak-sol-nondiv} holds for all {$\phi \in \oSob{1}{p_0}(\Omega)$}.  Let  $B_{2R}(x_0)\subset \Omega$ and choose a cut-off function $\zeta \in C_c^\infty(B_{2R}(x_0))$ so that {$0\leq \zeta \leq 1$,} $\zeta =1$ on $B_R(x_0)${,} and $\norm{\nabla \zeta}{\Leb{\infty}(\mathbb{R}^n)}\leq C/R$ for some constant $C=C(n)>0$.  Denote $\overline{v}=v-(v)_{B_{2R}(x_0)}$ and take $\phi =\zeta^2 \overline{v}$ in \eqref{eq:weak-sol-nondiv}.  Since $\Div \boldb \geq 0$ in $\Omega$, it follows from Proposition \ref{prop:basic-estimates} that 
\[
-\int_{\Omega} (\boldb \cdot \nabla \overline{v})\zeta^2 \overline{v}\myd{x}-\int_{\Omega} (\boldb \cdot \nabla \zeta)\zeta \overline{v}^2 \myd{x}=-\int_{\Omega} [\boldb \cdot \nabla (\zeta \overline{v})]\zeta \overline{v}\myd{x}\geq 0.
\]
Then we get 
\begin{align*}
&\int_\Omega \zeta^2 (A^T\nabla {v})\cdot \nabla v \myd{x}+\int_\Omega \zeta \overline{v}^2 (\boldb \cdot \nabla \zeta)\myd{x}+2\int_\Omega \zeta \overline{v}(A^T\nabla v) \cdot \nabla \zeta \myd{x}\\
&\leq -\int_\Omega  \zeta^2 \boldG \cdot \nabla v \myd{x} -2\int_\Omega \zeta\overline{v}\boldG\cdot \nabla \zeta \myd{x} 
\end{align*} 

By H\"older's inequality in Lorentz spaces, we have 
\begin{align*}
\int_\Omega \zeta \overline{v}^2 (\boldb \cdot \nabla \zeta)\myd{x}&\leq \frac{C}{R} \norm{\boldb}{\Lor{n}{\infty}(\Omega)} \norm{|\overline{v}|^2}{\Lor{n'}{1}(B_{2R}(x_0))} \\
&= \frac{C}{R} \norm{\boldb}{\Lor{n}{\infty}(\Omega)} \norm{\overline{v}}{\Lor{2n'}{2}(B_{2R}(x_0))}^2 
\end{align*}
Hence it follows from H\"older's inequality in Lorentz spaces and Poincar\'e-Sobolev's inequality that there exists a constant {$C=C(n,r)>0$} such that 
\[  \norm{\overline{v}}{\Lor{2n'}{2}(B_{2R}(x_0))} \leq C R^{1+n/2n'-n/r}\norm{\nabla v}{\Leb{r}(B_{2R}(x_0))}  \]
Hence we get 
\begin{align*}     
\int_\Omega \zeta \overline{v}^2 (\boldb \cdot \nabla \zeta)\myd{x} &\leq   C R^{n-2n/r} \norm{\boldb}{\Lor{n}{\infty}(\Omega)} \norm{\nabla v}{\Leb{r}(B_{2R}(x_0))}^2
\end{align*}
for some constant $C=C(n,r)>0$. Similarly, it follows from the boundedness of $A$,  H\"older's inequality, and Poincar\'e-Sobolev's inequality that 
\begin{align*}
\int_{\Omega} \zeta \overline{v} (A^T\nabla v) \cdot \nabla \zeta \myd{x} &\leq \frac{C}{R} \int_{B_{2R}(x_0)} |\overline{v}| |\nabla v |\myd{x} \\
&\leq \frac{C}{R} \norm{\overline{v}}{\Leb{2}(B_{2R}(x_0))} \norm{\nabla v}{\Leb{2}(B_{2R}(x_0))}\\
&=CR^{n/2-n/r} \norm{\nabla v}{\Leb{r}(B_{2R}(x_0))} \norm{\nabla v}{\Leb{2}(B_{2R}(x_0))} 
\end{align*}
for some constant $C=C(n,r,K)>0$. Finally, by Cauchy-Schwarz inequality, we have 
\begin{align*} 
  -\int_{\Omega} \zeta^2 \boldG \cdot \nabla v \myd{x} &\leq \norm{\boldG}{\Leb{2}(B_{2R}(x_0))} \norm{\nabla v}{\Leb{2}(B_{2R}(x_0))}.
\end{align*}
Also, it follows from Young's inequality and Poincar\'e-Sobolev's inequality that 
\begin{align*}
\int_{\Omega} \zeta \overline{v} \boldG \cdot \nabla \zeta \myd{x} &\leq \frac{1}{2}\int_{B_{2R}(x_0)} |\boldG|^2 \myd{x} +\frac{C}{R^2} \int_{B_{2R}(x_0)} |\overline{v}|^2 \myd{x} \\
&\leq \frac{1}{2}\int_{B_{2R}(x_0)} |\boldG|^2 \myd{x} +\frac{C}{R^2} \norm{\nabla v}{\Leb{r}(B_{2R}(x_0))}^2 R^{n-2n/r+2} 
\end{align*}
for some constant $C=C(n,r)>0$. Collecting all of these estimates and using uniform ellipticity of $A$, we get the desired estimate. This completes the proof of Proposition \ref{prop:interior-reverse-Holder}.
\end{proof}

Next we derive a boundary reverse H\"older estimate for the gradient of weak solution. 
\begin{prop}\label{prop:boundary-reverse-Holder}
Let $\Omega$ be a bounded Lipschitz domain in $\mathbb{R}^n$  {and $\boldG\in \Leb{2}(\Omega)^n$}. Assume that $A$ satisfies \eqref{eq:uniformly-elliptic}, and $\boldb \in \Lor{n}{\infty}(\Omega)^n$, and $\Div \boldb \geq 0$ in $\Omega$.   Then {there exists $R_0>0$ depending on the Lipschitz character of $\Omega$ such that} if {$\frac{2n}{n+1}<r<2$} and $v\in \oSob{1}{p_0}(\Omega)$ is a weak solution of \eqref{eq:non-divergence-type} with {$g=\Div\boldG$}, then there exists a constant $C$ depending only on $\delta$, $K$, $n$,  $r$, and the Lipschitz character of $\Omega$ such that 
\begin{align*}
  \int_{\Omega_{R}(x_0)} |\nabla v|^2 \myd{x}&\leq C R^{n-2n/r} (1+\norm{\boldb}{\Lor{n}{\infty}(\Omega)}) \left(\int_{\Omega_{2R}(x_0)}|\nabla v|^r \myd{x} \right)^{2/r}\\
  &\relphantom{=}+CR^{n/2-n/r} \left(\int_{\Omega_{2R}(x_0)} |\nabla v|^r \myd{x} \right)^{1/r} \left(\int_{\Omega_{2R}(x_0)} |\nabla v|^2 \myd{x} \right)^{1/2}\\
  &\relphantom{=}+\frac{1}{2}\int_{\Omega_{2R}(x_0)} |\boldG|^2 \myd{x}+\left(\int_{\Omega_{2R}(x_0)} |\boldG|^2\myd{x} \right)^{1/2}\left(\int_{\Omega_{2R}(x_0)}|\nabla v|^2 \myd{x} \right)^{1/2}
\end{align*}
where $\Omega_R(x_0)=\Omega \cap B_R(x_0)$, $x_0 \in \partial{\Omega}$, and $0<R\leq R_0/2$.  
\end{prop}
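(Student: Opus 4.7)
\textbf{Proof proposal for Proposition \ref{prop:boundary-reverse-Holder}.} The strategy is to mimic the interior argument of Proposition \ref{prop:interior-reverse-Holder}, but using $v$ itself (rather than $v-(v)_{B_{2R}(x_0)}$) as the quantity on which to apply a Poincar\'e-type inequality, and replacing the standard Poincar\'e-Sobolev inequality \eqref{eq:interior-Poincare} with its vanishing-trace counterpart \eqref{eq:boundary-Poincare}. To this end, I would first choose $R_0>0$ depending only on the Lipschitz character of $\Omega$ so that the exterior measure density \eqref{eq:Lipschitz-domain-exterior-ball} holds. Then, for $x_0\in\partial\Omega$ and $0<R\leq R_0/2$, I would extend $v$ by zero outside $\Omega$, and note that on the ball $B_{2R}(x_0)$ the zero extension (still called $v$) lies in $\Sob{1}{p_0}(B_{2R}(x_0))$ and satisfies $|\{x\in B_{2R}(x_0):v(x)=0\}|\geq c|B_{2R}(x_0)|$ by \eqref{eq:Lipschitz-domain-exterior-ball}. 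Therefore \eqref{eq:boundary-Poincare} is applicable and yields, for $\tfrac{2n}{n+1}<r<2$,
\[
\norm{v}{\Lor{2n'}{2}(\Omega_{2R}(x_0))}\leq C R^{1+n/(2n')-n/r}\norm{\nabla v}{\Leb{r}(\Omega_{2R}(x_0))}
\quad\text{and}\quad
\norm{v}{\Leb{2}(\Omega_{2R}(x_0))}\leq C R^{1+n/2-n/r}\norm{\nabla v}{\Leb{r}(\Omega_{2R}(x_0))}
\]
with constants $C$ depending only on $n$, $r$ and the Lipschitz character of $\Omega$.

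Next, I would pick $\zeta\in C_c^\infty(B_{2R}(x_0))$ with $0\leq\zeta\leq 1$, $\zeta=1$ on $B_R(x_0)$ and $\norm{\nabla\zeta}{\Leb{\infty}}\leq C/R$, and test the weak formulation \eqref{eq:weak-sol-nondiv} against $\phi=\zeta^2 v$. Since $v\in\oSob{1}{p_0}(\Omega)$, the density of $C_c^\infty(\Omega)$ in $\oSob{1}{p_0}(\Omega)$ together with Proposition \ref{prop:basic-estimates}(i) legitimizes this test. The crucial step is the sign of the drift contribution: writing
\[
-\int_\Omega(\boldb\cdot\nabla v)\zeta^2 v\myd{x}-\int_\Omega(\boldb\cdot\nabla\zeta)\zeta v^2\myd{x}=-\int_\Omega[\boldb\cdot\nabla(\zeta v)]\,\zeta v\myd{x},
\]
and applying Proposition \ref{prop:basic-estimates}(ii) to $\zeta v\in \oSob{1}{p_0}(\Omega)$ shows that this right-hand side is $\geq 0$. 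This is exactly the structural identity used in the interior case, except that here the vanishing boundary values of $v$ (rather than the mean-value subtraction) supply the admissibility of Proposition \ref{prop:basic-estimates}(ii).

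With this sign in hand, the testing identity reduces to
\[
\int_\Omega\zeta^2(A^T\nabla v)\cdot\nabla v\myd{x}\leq -\int_\Omega\zeta v^2(\boldb\cdot\nabla\zeta)\myd{x}-2\int_\Omega\zeta v(A^T\nabla v)\cdot\nabla\zeta\myd{x}-\int_\Omega\zeta^2\boldG\cdot\nabla v\myd{x}-2\int_\Omega\zeta v\boldG\cdot\nabla\zeta\myd{x}.
\]
I would then bound each right-hand term exactly as in the proof of Proposition \ref{prop:interior-reverse-Holder}: the drift term by the Lorentz H\"older inequality (Proposition \ref{prop:Holder-Sobolev}(i)) together with the first Poincar\'e-Sobolev estimate above, giving $CR^{n-2n/r}\norm{\boldb}{\Lor{n}{\infty}}\norm{\nabla v}{\Leb{r}(\Omega_{2R})}^2$; the cross term involving $A^T\nabla v$ by the $\Leb{2}$ Cauchy--Schwarz inequality combined with the second Poincar\'e-Sobolev estimate, giving $CR^{n/2-n/r}\norm{\nabla v}{\Leb{r}(\Omega_{2R})}\norm{\nabla v}{\Leb{2}(\Omega_{2R})}$; and the two $\boldG$-terms by Cauchy--Schwarz and Young's inequality. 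Using the uniform ellipticity of $A$ on the left to bound $\int \zeta^2(A^T\nabla v)\cdot\nabla v\,dx$ from below by $\delta\int_{B_R(x_0)}|\nabla v|^2\,dx$ then delivers the claimed inequality.

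The only genuine obstacle compared with the interior case is ensuring that the Poincar\'e inequality \eqref{eq:boundary-Poincare} is legitimately applied to the zero-extended $v$; this is precisely where the exterior measure density \eqref{eq:Lipschitz-domain-exterior-ball} (valid for $R\leq R_0$) is indispensable and is the step that constrains the choice of $R_0$ to depend on the Lipschitz character of $\Omega$. Every other ingredient is a direct boundary adaptation of the interior proof.
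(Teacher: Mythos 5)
Your proposal is correct and follows essentially the same route as the paper: extend $v$ and $\boldG$ by zero, use the exterior measure density \eqref{eq:Lipschitz-domain-exterior-ball} (which fixes $R_0$) to make the vanishing-trace Poincar\'e--Sobolev inequality \eqref{eq:boundary-Poincare} applicable to the extension on full balls $B_{2R}(x_0)$, and then test with $\phi=\zeta^2 v$ and repeat the interior argument with $v$ in place of $v-(v)_{B_{2R}}$. The paper states this more tersely (``following the exactly same argument as in Proposition~\ref{prop:interior-reverse-Holder}''), while you usefully spell out the drift-sign identity and confirm that $\zeta v\in\oSob{1}{p_0}(\Omega)$ so that Proposition~\ref{prop:basic-estimates}(ii) applies; there is no substantive difference.
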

\begin{proof}
Since $\Omega$ is a bounded Lipschitz domain in $\mathbb{R}^n$, it follows from \eqref{eq:Lipschitz-domain-exterior-ball} that there are constants $R_0>0$ and $C>0$ depending on the Lipschitz character of $\Omega$ such that 
\begin{equation*}
  |B_{2R}(x_0)\setminus \Omega| \geq CR^n\quad \text{for all } x_0 \in \partial\Omega,\quad 0< R\leq R_0/2.
\end{equation*} 
Extend $v$ and $\boldG$  by zero outside $\Omega$ and we denote them by $\tilde{v}$ and $\tilde{\boldG}$, respectively.  Since 
\[    \{ x \in B_{2R}(x_0) :  \tilde{v}(x)=0 \} \supset B_{2R}(x_0)\setminus  \Omega,\]
we have $|\{ x\in B_{2R}(x_0) : \tilde{v}(x)=0 \}|\geq C |B_{2R}(x_0)|$ for any $0<R\leq  R_0/2$.  Hence by H\"older's inequality in Lorentz spaces and Poincar\'e-Sobolev's inequality, we have 
\begin{align*}    
\norm{\tilde{v}}{\Lor{2n'}{2}(B_{2R}(x_0))} &\leq C R^{1/2+n/2-n/r} \norm{\tilde{v}}{\Leb{r^*}(B_{2R}(x_0))}\\
& \leq CR^{1+n/2n'-n/r}\norm{\nabla \tilde{v}}{\Leb{r}(B_{2R}(x_0))}.
\end{align*}
Choose a cut-off function $\zeta \in C_c^\infty(B_{2R}(x_0))$ so that $0\leq \zeta \leq 1$, $\zeta =1 $ on $B_{R}(x_0)$, and $\norm{\nabla \zeta}{\Leb{\infty}(\mathbb{R}^n)}\leq C(n)/R$. Then $\zeta^2 \tilde{v}\in \oSob{1}{2}(\Omega_{2R}(x_0))$. Hence following the exactly same argument as in Proposition \ref{prop:interior-reverse-Holder}, we can obtain the desired estimate. This completes the proof of Propsition \ref{prop:boundary-reverse-Holder}. 
\end{proof}

\section{Proofs of Theorem \ref{thm:main-theorem-1} and \ref{thm:main-theorem-2}}\label{sec:proofs}

This section is devoted to proving Theorems  \ref{thm:main-theorem-1} and \ref{thm:main-theorem-2} which are main theorems of this paper. We use the following lemma which can be found in \cite[Proposition 3.7]{DK18}. See  \cite[Chapter V]{G83} for the proof. 
\begin{lem}\label{lem:Gehring}
Let $1<q_0<q_1$, $\Phi\geq 0$ in a $n$-dimensional cube $Q$, and $\Psi \in \Leb{q_1}(Q)$. Suppose that 
\[  \fint_{B_r(x_0)} \Phi^{q_0}\myd{x}\leq C_0 \left(\fint_{B_{8r}(x_0)} \Phi \myd{x} \right)^{q_0} + C_0 \fint_{B_{8r}(x_0)} \Psi^{q_0}\myd{x}+\theta \fint_{B_{8r}(x_0)} \Phi^{q_0}\myd{x} \]
for every $x_0 \in Q$ and $0<r\leq R_2$ such that $B_{8r}(x_0)\subset Q$, where $R_2$ and $\theta$ are constants with $R_2>0$ and $\theta \in [0,1)$. Then $\Phi \in \Leb{p}_{\loc}(Q)$ for $p\in [q_0,q_0+\varepsilon)$ and 
\[  \left(\fint_{B_r(x_0)} \Phi^p\myd{x} \right)^{1/p}\leq C \left(\fint_{B_{8r}(x_0)} \Phi^{q_0}\myd{x} \right)^{1/q_0} + C \left(\fint_{B_{8r}(x_0)} \Psi^p\myd{x} \right)^{1/p} \]
for all $B_{8r}(x_0)\subset Q$, $0<r<R_2$, where $C$ and $\varepsilon$ depends only on $n$, $q_0$, $q_1$, $\theta$, and $C_0$, and $\varepsilon$ satisfies $0<\varepsilon<q_1-q_0$. 
\end{lem}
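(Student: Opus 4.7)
The plan is to establish this self-improving property by the classical Calderón–Zygmund / good-$\lambda$ machinery due to Gehring and refined by Giaquinta–Modica, with extra care given to the absorbable term $\theta\fint_{B_{8r}}\Phi^{q_0}$, which is absent in the prototypical statement.

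First I would fix a ball $B_R(x_0)\subset Q$ with $8R\leq R_2$ and work with the localized Hardy–Littlewood maximal function $M\Phi^{q_0}$ restricted to $B_R(x_0)$. Setting $t_0:=\fint_{B_R(x_0)}\Phi^{q_0}\,dx$ and fixing a level $t>At_0$ for a large dimensional $A$, a Vitali-type stopping-time argument on the super-level set $\{M\Phi^{q_0}>t\}$ produces a disjoint collection $\{B_{r_i}(x_i)\}$ of balls such that
\[
\fint_{B_{r_i}(x_i)}\Phi^{q_0}\,dx>t,\qquad \fint_{B_{8r_i}(x_i)}\Phi^{q_0}\,dx\leq t,
\]
and such that $\{M\Phi^{q_0}>t\}$ is covered by $\bigcup_i B_{8r_i}(x_i)$ modulo a null set. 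Applying the hypothesis on each $B_{r_i}(x_i)$ and splitting $\Phi=\Phi\,\mathbf{1}_{\{\Phi\leq \eta t^{1/q_0}\}}+\Phi\,\mathbf{1}_{\{\Phi>\eta t^{1/q_0}\}}$ with $\eta$ a small parameter, Jensen's inequality controls the first piece by $\eta^{q_0-1}\cdot t$, while the second is dominated by the integral of $\Phi^{q_0}$ over $\{\Phi>\eta t^{1/q_0}\}$; the $\theta$-term contributes $\theta t\,|B_{8r_i}|$. Summing over $i$ yields the good-$\lambda$ inequality
\[
\int_{\{M\Phi^{q_0}>At\}}\Phi^{q_0}\,dx\leq \bigl(C_0\eta^{q_0-1}+C_1\theta\bigr)\int_{\{M\Phi^{q_0}>t\}}\Phi^{q_0}\,dx+C_2\int_{\{\Psi>\eta t^{1/q_0}\}}\Psi^{q_0}\,dx,
\]
for constants $C_1,C_2$ depending on $n,q_0,C_0$.

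Next, multiplying by $t^{p/q_0-1}$ and integrating over $t\in(At_0,\infty)$, then applying Fubini, converts the distributional estimate into an inequality of the form $\|M\Phi^{q_0}\|_{L^{p/q_0}}^{p/q_0}\leq \alpha(\eta,\theta,p)\|M\Phi^{q_0}\|_{L^{p/q_0}}^{p/q_0}+C\|\Psi\|_{L^p}^p+(\text{boundary terms from }t_0)$, where $\alpha(\eta,\theta,p)$ is a constant multiple of $(C_0\eta^{q_0-1}+\theta)\cdot \tfrac{p/q_0}{p/q_0-1}$ up to harmless factors. Choosing $\eta$ small (depending on $C_0,q_0$), then $\varepsilon=p-q_0$ small enough that $\alpha<1/2$, permits absorption and produces the claimed $L^p$ reverse Hölder bound on $B_r(x_0)$ in terms of an average on $B_{8r}(x_0)$.

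The main obstacle is precisely this absorption: the classical Gehring setting has $\theta=0$, whereas here the constraint $\theta<1$ is essential, because as $\eta\to 0$ and $p\to q_0$ the coefficient $\alpha(\eta,\theta,p)$ tends to $C\theta\cdot\tfrac{p/q_0}{p/q_0-1}$, which is $<1$ only if $\theta<1$ and $p-q_0$ is chosen sufficiently small. This tradeoff is what forces $\varepsilon$ to depend quantitatively on $1-\theta$ in addition to $n,q_0,q_1,C_0$, and the upper restriction $\varepsilon<q_1-q_0$ ensures that the right-hand side involving $\Psi$ stays finite throughout the integration.
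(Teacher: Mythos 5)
The paper does not actually prove Lemma~\ref{lem:Gehring}; it only cites \cite[Proposition 3.7]{DK18} for the statement and points to \cite[Chapter V]{G83} for the proof. Your sketch therefore should be judged against the standard Gehring--Giaquinta--Modica argument, and in broad outline it is on the right track: Calder\'on--Zygmund stopping at level $t$, applying the reverse H\"older hypothesis on each stopping ball, summing to a good-$\lambda$ inequality, and then integrating in $t$ and absorbing. That is indeed the architecture of the proof in \cite{G83}.

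However, the quantitative mechanism you give for the absorption is incorrect, and the error is not cosmetic. You assert that as $\eta\to 0$ and $p\to q_0^+$ the absorption coefficient $\alpha(\eta,\theta,p)$ tends to $C\theta\cdot\frac{p/q_0}{p/q_0-1}$, and that this is $<1$ if $\theta<1$ and $p-q_0$ is small. But $\frac{p/q_0}{p/q_0-1}\to+\infty$ as $p\to q_0^+$, so making $p-q_0$ small would make the coefficient \emph{larger}, not smaller; the stated conclusion contradicts the stated asymptotic. In the correct Fubini step one integrates against $\beta t^{\beta-1}\,dt$ with $\beta=p/q_0-1$ and obtains a multiplicative factor of order $A^{\beta}$, which tends to $1$ as $\beta\to 0^+$; the condition for absorption is (roughly) $aA^{\beta}<1$ where $a$ is the good-$\lambda$ coefficient, so what one actually needs is $a<1$ uniformly, which in turn forces $\theta<1$ and $\eta$ small. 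There is no $\frac{p/q_0}{p/q_0-1}$ singularity in front of $\theta$. Two smaller inaccuracies: the low-level piece $\bigl(\fint\Phi\,\mathbf{1}_{\{\Phi\le\eta t^{1/q_0}\}}\bigr)^{q_0}$ is $\le\eta^{q_0}t$, not $\eta^{q_0-1}t$ (one uses the pointwise bound, not Jensen); and after the $t$-integration the high-level piece contributes a term proportional to $\eta^{-(p-q_0)}\int\Phi^{p}$, whose coefficient \emph{increases} as $\eta\to 0$, so the interplay between the choice of $\eta$, the size of $p-q_0$, and the finite upper cutoff in $t$ (needed since $\Phi\in L^{p}$ is not yet known) must be tracked more carefully than the sketch suggests --- this is exactly the point where the cited proof in \cite[Chapter V]{G83} uses a truncation of $\Phi$ and a limiting argument, or alternatively an either/or dichotomy on each stopping cube, rather than the naive level-splitting you propose.
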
 

Using interior and boundary reverse H\"older estimate for the gradient of weak solution, we obtain the following a priori estimate. 
\begin{thm}\label{thm:higher-integrability}
Let $\Omega$ be a bounded Lipschitz domain in $\mathbb{R}^n$  {$q>2$, and  $\boldG\in \Leb{q}(\Omega)^n$}. Assume that $A$ satisfies \eqref{eq:uniformly-elliptic}, $\boldb \in \Lor{n}{\infty}(\Omega)^n$, and $\Div \boldb \geq 0$ in $\Omega$.   Suppose that $v\in \oSob{1}{2}(\Omega)$ if $n\geq 3$ and $v\in \oSob{1}{p_0}(\Omega)$ if $n=2$ for some $p_0>2$ satisfies
\begin{equation}\label{eq:weak-solution-4}   
  \int_{\Omega} (A^T\nabla v) \cdot \nabla \phi-(\boldb \cdot \nabla v)\phi  \myd{x}=-\int_\Omega  \boldG \cdot \nabla \phi \myd{x} 
\end{equation}
for all $\phi \in C_c^\infty(\Omega)$. Then  there exists $\varepsilon>0$ depending on $n$, $\delta$, $K$, $\norm{\boldb}{\Lor{n}{\infty}(\Omega)}$ and the Lipschitz character of $\Omega$ such that  $\nabla v \in \Leb{2+\varepsilon}(\Omega)^n$. Moreover, we have 
\[  \norm{\nabla v}{\Leb{2+\varepsilon}(\Omega)} \leq C(\norm{\nabla v}{\Leb{2}(\Omega)} + \norm{\boldG}{\Leb{q}(\Omega)})\]
for some constant $C$ depending on $n$, $q$, $K$, $\norm{\boldb}{\Lor{n}{\infty}(\Omega)}$, and the Lipschitz character of $\Omega$. 
\end{thm}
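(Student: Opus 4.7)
The plan is to combine the interior and boundary reverse Hölder estimates from Propositions \ref{prop:interior-reverse-Holder} and \ref{prop:boundary-reverse-Holder} with the Gehring-type self-improving lemma (Lemma \ref{lem:Gehring}). First I would fix some $r$ with $\frac{2n}{n+1}<r<2$ and extend $v$ and $\boldG$ by zero outside $\Omega$, obtaining $\tilde v\in W^{1,p_0}(\mathbb{R}^n)$ and $\tilde \boldG\in L^q(\mathbb{R}^n)^n$ supported in $\overline\Omega$. Enclose $\overline\Omega$ inside a cube $Q$. For each $x_0\in Q$ and each radius $R\leq R_0/2$, I split into three cases: (a) if $B_{2R}(x_0)\subset\Omega$, apply the interior estimate directly; (b) if $B_{2R}(x_0)\cap\Omega=\emptyset$, everything vanishes; (c) if $B_{2R}(x_0)$ meets $\partial\Omega$, pick $y_0\in\partial\Omega$ with $|x_0-y_0|\leq 2R$ and apply the boundary estimate at $y_0$ on balls $B_{4R}(y_0)\supset B_{2R}(x_0)$ (absorbing volume factors into the constant). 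The zero extension ensures that integrals over $B\cap\Omega$ and over $B$ agree, so all three cases produce a uniform inequality valid on $Q$.

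Next I would pass from volumetric integrals to averages. Dividing by $|B_R|\simeq R^n$, the scaling factors $R^{n-2n/r}$ and $R^{n/2-n/r}$ combine with the volumes of $B_{2R}$ to produce the scale-invariant right-hand side
\[
\fint_{B_R}|\nabla \tilde v|^2\leq C\Bigl(\fint_{B_{2R}}|\nabla \tilde v|^r\Bigr)^{2/r}+C\Bigl(\fint_{B_{2R}}|\nabla\tilde v|^r\Bigr)^{1/r}\Bigl(\fint_{B_{2R}}|\nabla\tilde v|^2\Bigr)^{1/2}+C\fint_{B_{2R}}|\tilde\boldG|^2+C\Bigl(\fint_{B_{2R}}|\tilde\boldG|^2\Bigr)^{1/2}\Bigl(\fint_{B_{2R}}|\nabla\tilde v|^2\Bigr)^{1/2},
\]
and a Young's inequality applied to each mixed term absorbs the $(\fint_{B_{2R}}|\nabla\tilde v|^2)^{1/2}$ factors into a small constant times $\fint_{B_{2R}}|\nabla\tilde v|^2$. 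Choosing the Young parameter so that this coefficient is strictly less than $1$ yields an inequality of the shape required by Lemma \ref{lem:Gehring}, with $\Phi=|\nabla\tilde v|^r$, $\Psi=|\tilde\boldG|^r$, and exponent $q_0=2/r>1$. A minor technicality is that the stated Gehring lemma uses the ratio $B_r$ vs.\ $B_{8r}$; I would either accommodate this by applying the derived reverse Hölder inequality on balls of radius $R/4$ (which only costs a factor $4^n$ in front of averages on $B_{2R}$), or appeal to the well-known flexibility of Gehring's lemma with respect to the ball ratio.

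Since $\Psi=|\tilde\boldG|^r\in L^{q/r}_{\loc}(Q)$ and $q_0=2/r<q/r=q_1$, Lemma \ref{lem:Gehring} supplies an $\varepsilon>0$, depending only on $n$, $q$, $\delta$, $K$, $\|\boldb\|_{\Lor{n}{\infty}(\Omega)}$, and the Lipschitz character of $\Omega$, together with a reverse Hölder bound
\[
\Bigl(\fint_{B_r(x_0)}|\nabla \tilde v|^{2+\varepsilon}\Bigr)^{\frac{1}{2+\varepsilon}}\leq C\Bigl(\fint_{B_{8r}(x_0)}|\nabla \tilde v|^{2}\Bigr)^{1/2}+C\Bigl(\fint_{B_{8r}(x_0)}|\tilde\boldG|^{2+\varepsilon}\Bigr)^{\frac{1}{2+\varepsilon}},
\]
for all small balls centered in $Q$. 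A finite covering argument, followed by Hölder's inequality to upgrade $\|\tilde\boldG\|_{L^{2+\varepsilon}}$ to $\|\boldG\|_{L^q}$ (which we can do once $\varepsilon<q-2$), then yields the global estimate $\|\nabla v\|_{L^{2+\varepsilon}(\Omega)}\leq C(\|\nabla v\|_{L^2(\Omega)}+\|\boldG\|_{L^q(\Omega)})$.

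The main obstacle I anticipate is the careful bookkeeping needed to merge the interior and boundary reverse Hölder inequalities into a single inequality valid for every ball centered in $Q$, and in particular handling balls that straddle $\partial\Omega$ so that the $\theta$-absorption term still acts on a ball that is comparable in measure to the one prescribed by Lemma \ref{lem:Gehring}. A secondary concern is verifying in the $n=2$ setting that the assumed a priori regularity $v\in W^{1,p_0}_0(\Omega)$ with $p_0>2$ legitimately enters the reverse Hölder derivation (the test function $\zeta^2(v-(v)_{B_{2R}})$ lies in $W^{1,p_0}_0(\Omega)$, and by Proposition \ref{prop:basic-estimates} the density argument in the hypothesis $\phi\in C_c^\infty(\Omega)$ extends to such $\phi$), and to check that the constants and the exponent $\varepsilon$ produced by Gehring's lemma do not depend on $p_0$.
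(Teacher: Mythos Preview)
Your proposal is correct and follows essentially the same route as the paper: extend by zero, merge the interior and boundary reverse H\"older estimates via a three-case split (the paper picks $y_0\in\partial\Omega$ and uses the chain $B_R(x_0)\subset B_{3R}(y_0)\subset B_{6R}(y_0)\subset B_{8R}(x_0)$ so that the ball ratio matches Lemma~\ref{lem:Gehring} exactly), absorb mixed terms by Young's inequality, set $\Phi=|\nabla\tilde v|^r$, $\Psi=|\tilde\boldG|^r$, $q_0=2/r$, apply Gehring, and finish by covering. The only cosmetic difference is that the paper fixes the specific value $r=\tfrac{2n+2}{n+2}$ and arranges the radii so that no rescaling of the Gehring ratio is needed.
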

\begin{proof}
Extend $v$ and $\boldG$ to be zero outside  $\Omega$. We write $\tilde{v}$ and $\tilde{\boldG}$ to be the extension of $v$ and $\boldG$, respectively. {If we set $r=\frac{2n+2}{n+2}$, then it follows from}  Propositions \ref{prop:interior-reverse-Holder}, \ref{prop:boundary-reverse-Holder}, and Young's inequality {that} for any $0<\theta<1$, there exists a constant  $C$ depending on $n$, $\theta$, $K$, $\delta$, and the Lipschitz character of $\Omega$ such that  
\begin{equation}\label{eq:reverse-type}
\begin{aligned}
\int_{B_{R}(x_0)} |\nabla \tilde{v}|^2 \myd{x}&\leq \theta\int_{B_{8R}(x_0)} |\nabla \tilde{v}|^2 \myd{x} \\
&\relphantom{=}+  C R^n (1+\norm{\boldb}{\Lor{n}{\infty}(\Omega)})\left(R^{-n} \int_{B_{8R}(x_0)} |\nabla \tilde{v}|^r \myd{x} \right)^{2/r}\\
&\relphantom{=}+C\int_{B_{8R}(x_0)}|\tilde{\boldG}|^2 \myd{x}
\end{aligned}
\end{equation}
for any $x_0 \in \mathbb{R}^n$ and $0<R\leq R_0/6$. Indeed, if $B_{2R}(x_0)\subset \Omega$, then the desired inequality follows by Proposition \ref{prop:interior-reverse-Holder}. If $B_{2R}(x_0)\cap \partial\Omega\neq \varnothing$, then there exists $y_0 \in \partial\Omega$ such that 
\[   |x_0-y_0|=\dist(x_0,\partial\Omega)\leq 2R. \]
This implies that 
\[   B_{R}(x_0)\subset B_{3R}(y_0)\subset B_{6R}(y_0)\subset B_{8R}(x_0). \]
It follows from Proposition \ref{prop:boundary-reverse-Holder} and $3R<R_0/2$ that the estimate \eqref{eq:reverse-type}  holds with $B_{3R}(y_0)$ and $B_{6R}(y_0)$ instead of $B_{R}(x_0)$ and $B_{8R}(x_0)$, respectively.   This implies the desired estimate \eqref{eq:reverse-type}. The case $B_{2R}(x_0)\subset \mathbb{R}^n\setminus \Omega$ is clear. This proves the estimate \eqref{eq:reverse-type}.

Set 
\[ \Phi = |\nabla \tilde{v}|^{r}\quad\text{and}\quad  \Psi = |\tilde{\boldG}|^r.  \]
Dividing $R^n$ in \eqref{eq:reverse-type}, we have  
\begin{align*}
\fint_{B_{R}(x_0)} \Phi^{2/r} \myd{x} &\leq \theta \left(\fint_{B_{8R}(x_0)} \Phi^{2/r}\myd{x} \right) \\
&\relphantom{=}+C(1+\norm{\boldb}{\Lor{n}{\infty}(\Omega)})\left(\fint_{B_{8R}(x_0)} \Phi \myd{x} \right)^{2/r}\\
&\relphantom{=}+C \fint_{B_{8R}(x_0)} \Psi^{2/r}\myd{x}.
\end{align*}
Since $2/r>1$, it follows from Lemma \ref{lem:Gehring} that there exists $p>2/r$ depending on $n$, $K$, $\delta$, and the Lipschitz character of $\Omega$ such that 
\[ \left(\fint_{B_{R}(x_0)} \Phi^p \myd{x} \right)^{1/p}\leq C\left(\fint_{B_{8R}(x_0)} \Phi^{2/r} \myd{x} \right)^{r/2}+C\left(\fint_{B_{8R}(x_0)} \Psi^{p} \myd{x} \right)^{1/p} \]
for some constant $C$ depending on $n$, $\norm{\boldb}{\Lor{n}{\infty}(\Omega)}$, $K$, $\delta$, and the Lipschitz character of $\Omega$. Since $pr>2$, it follows that there exists $\varepsilon>0$ such that $pr=2+\varepsilon$. This implies that 
\[  \left(\fint_{B_R(x_0)} |\nabla \tilde{v}|^{2+\varepsilon} \myd{x} \right)^{1/(2+\varepsilon)} \leq C\left(\fint_{B_{8R}(x_0)} |\nabla \tilde{v}|^2 \myd{x} \right)^{1/2} + C \left(\fint_{B_{8R}(x_0)} |\tilde{\boldG}|^q \myd{x} \right)^{1/q}    \]
Therefore by a standard covering argument, we have 
\[ \norm{\nabla v}{\Leb{2+\varepsilon}(\Omega)}\leq C (\norm{\nabla v}{\Leb{2}(\Omega)}+\norm{\boldG}{\Leb{q}(\Omega)})  \]
for some constant $C=C(n,q,\norm{\boldb}{\Lor{n}{\infty}(\Omega)},\delta,K,\Omega)>0$. This completes the proof of Theorem \ref{thm:higher-integrability}.
\end{proof}

Now we are ready to prove Theorem \ref{thm:main-theorem-1} when $n\geq 3$. 

\begin{proof}[Proof of Theorem \ref{thm:main-theorem-1} when $n\geq 3$]
Let $g\in \Sob{-1}{q}(\Omega)$. Then by Proposition \ref{prop:Riesz-representation}, there exists $\boldG \in \Leb{q}(\Omega)^n$ such that $\Div \boldG = g$ in $\Omega$. Since $q>2$ and $\Omega$ is bounded, it follows from Proposition \ref{prop:W12-estimates} that there exists a unique weak solution $v\in \oSob{1}{2}(\Omega)$ of \eqref{eq:non-divergence-type} satisfying \eqref{eq:weak-solution-4}. By Theorem \ref{thm:higher-integrability}, there exists $\varepsilon>0$ depending on $n$, {$q$,} $\delta$, $K$, $\norm{\boldb}{\Lor{n}{\infty}(\Omega)}$, and the Lipschitz character of $\Omega$ such that  $\nabla v \in \Leb{2+\varepsilon}(\Omega)^n$. Moreover, we have 
\[  \norm{\nabla v}{\Leb{2+\varepsilon}(\Omega)} \leq C(\norm{\nabla v}{\Leb{2}(\Omega)} + \norm{\boldG}{\Leb{q}(\Omega)})\]
for some constant $C$ depending on {$n$, $q$, $\Omega$,} $\delta$, $K$, $\norm{\boldb}{\Lor{n}{\infty}(\Omega)}$. Therefore, it follows from Proposition \ref{prop:W12-estimates} and the Poincar\'e inequality that  
\[   \norm{v}{\Sob{1}{2+\varepsilon}(\Omega)} \leq C \norm{\boldG}{\Leb{q}(\Omega)} \leq C \norm{g}{\Sob{-1}{q}(\Omega)}\]
for some constant $C$ depending on $\Omega$, $n$, $q$, $\delta$, $K$, $\norm{\boldb}{\Lor{n}{\infty}(\Omega)}$. This proves the existence part of Theorem \ref{thm:main-theorem-1}. The proof of uniqueness follows by Proposition \ref{prop:W12-estimates}. This completes the proof of Theorem \ref{thm:main-theorem-1}.  
\end{proof}

In contrast to the case of $n\geq 3$, further scrutiny is necessary when we are working on two-dimensional case since it seems hard to show an existence of weak solution of \eqref{eq:non-divergence-type} by using Lax-Milgram theorem as before. The main reason is that   $\boldb \cdot \nabla v$ does not belongs to $\Sob{-1}{2}(\Omega)$ for general $\boldb \in \Lor{2}{\infty}(\Omega)^2$ and $v\in \oSob{1}{2}(\Omega)$. To overcome this difficulty, we use some approximation argument by regularizing $A$ and $\boldb$, and domain approximation of  $\Omega$. 

The following proposition will be used to construct a solution of an approximate problem to \eqref{eq:non-divergence-type}. 
\begin{prop}\label{prop:approximation}
Let $\Omega$ be a bounded Lipschitz domain in $\mathbb{R}^n$, $n\geq 2$ and let $1<p<\infty$. Suppose that  $\boldb \in \Lor{p}{\infty}(\Omega)^n$ satisfies $\Div \boldb \geq 0$ in $\Omega$. Then there exist a sequence of domains $\{\Omega_k\}$ and a sequence of vector fields $\{\boldb_k\}$ such that 
\begin{enumerate}
\item[\rm (i)] there is a covering of $\partial\Omega$ by coordinate cylinders $Z$, so that given a coordinate pair $(Z,\varphi)$, then $Z^*\cap \partial\Omega_j$ is given as the graph of a $C^\infty$-function $\varphi_j$ for all $j$ such that $\varphi_j\rightarrow \varphi$ uniformly, $\norm{\nabla \varphi_j}{\Leb{\infty}(\mathbb{R}^{n-1})}\leq \norm{\nabla \varphi}{\Leb{\infty}(\mathbb{R}^{n-1})}$, and $\nabla \varphi_j\rightarrow \nabla \varphi$ pointwise a.e.;
\item[\rm (ii)] $\boldb_k \in C^\infty(\Omega)^n$ and $\boldb_k\rightarrow \boldb$ a.e. on $\Omega$ as $k\rightarrow \infty$. Also, $\norm{\boldb_{k}}{\Lor{p}{\infty}(\Omega)}\leq C \norm{\boldb}{\Lor{p}{\infty}(\Omega)}$, where $C$ depends only on $n$ and $p$. Moreover, 
\[   \norm{\boldb_k-\boldb}{\Leb{q}(\Omega)} \rightarrow 0 \]
as $k\rightarrow \infty$ for any $1<q<p$;
\item[\rm (iii)] for each $k \in \mathbb{N}$, we have $\Div \boldb_k\geq 0$ in $\Omega_k$.
\end{enumerate}
\end{prop}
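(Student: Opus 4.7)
My plan is to combine Verchota's interior smooth-domain approximation (Theorem \ref{thm:Verchota}) with a Friedrichs mollification of $\boldb$, where the mollification scale is taken strictly smaller than the gap between $\Omega_k$ and $\partial\Omega$; this choice is precisely what lets one transfer the non-negativity of the distributional divergence $\Div \boldb$ from $\Omega$ to the smaller domain $\Omega_k$.

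First I would invoke Theorem \ref{thm:Verchota} directly to obtain the sequence $\{\Omega_k\}$ of $C^\infty$-domains satisfying (i), and set $d_k = \dist(\overline{\Omega_k}, \partial\Omega) > 0$. Next I would extend $\boldb$ to a field $\tilde{\boldb}$ on $\mathbb{R}^n$ by zero outside $\Omega$, fix a standard non-negative radial mollifier $\rho \in C_c^\infty(B_1(0))$ with $\int_{\mathbb{R}^n} \rho \myd{x} = 1$, let $\rho_\varepsilon(y) = \varepsilon^{-n}\rho(y/\varepsilon)$, choose $\varepsilon_k = \min(d_k/2,\, 1/k)$, and define $\boldb_k := \tilde{\boldb} * \rho_{\varepsilon_k}$ restricted to $\Omega$.

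For (ii), since $\Omega$ is bounded one has $\Lor{p}{\infty}(\Omega) \hookrightarrow \Leb{q}(\Omega)$ for every $1 < q < p$, so $\tilde{\boldb} \in \Leb{q}(\mathbb{R}^n)$ and standard mollifier theory gives $\boldb_k \in C^\infty(\mathbb{R}^n)^n$ together with $\boldb_k \to \boldb$ a.e.\ in $\Omega$ and $\|\boldb_k - \boldb\|_{\Leb{q}(\Omega)} \to 0$. The uniform Lorentz bound $\|\boldb_k\|_{\Lor{p}{\infty}(\Omega)} \leq C(n,p)\|\boldb\|_{\Lor{p}{\infty}(\Omega)}$ would follow from the weak-type Young inequality, using $\|\rho_{\varepsilon_k}\|_{\Leb{1}(\mathbb{R}^n)} = 1$ and $\|\tilde{\boldb}\|_{\Lor{p}{\infty}(\mathbb{R}^n)} = \|\boldb\|_{\Lor{p}{\infty}(\Omega)}$.

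For (iii), fix $x \in \Omega_k$ and observe that, by the choice $\varepsilon_k \leq d_k/2$, the function $\eta_x(y) := \rho_{\varepsilon_k}(x-y)$ is a non-negative element of $C_c^\infty(\Omega)$. Differentiating $\boldb_k$ under the integral sign and integrating by parts yields
\[
\Div \boldb_k(x) = -\int_\Omega \boldb(y) \cdot \nabla_y \rho_{\varepsilon_k}(x-y) \myd{y} = \action{\Div \boldb,\; \eta_x} \geq 0,
\]
where the last inequality is the hypothesis $\Div \boldb \geq 0$ in $\Omega$ tested against the non-negative $\eta_x \in C_c^\infty(\Omega)$. The only non-routine ingredient is the weak-type Young inequality in the third step; once one grants that convolution against an $L^1$-probability kernel is bounded on $\Lor{p}{\infty}$ with a constant depending only on $n$ and $p$, every other item reduces to classical mollification theory, so I do not expect substantial difficulty.
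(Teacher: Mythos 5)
Your proof is correct and follows essentially the same route as the paper: invoke Verchota's approximating smooth domains, mollify the zero extension of $\boldb$ at a scale smaller than $\dist(\overline{\Omega_k},\partial\Omega)$, and use weak-type Young plus the support condition to transfer the sign of the divergence. The only difference is that you spell out the integration-by-parts argument in step (iii) where the paper simply asserts it is ``easy to see,'' so your version is, if anything, slightly more detailed.
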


\begin{proof}
Choose a sequence of bounded smooth domains $\{\Omega_k\}$ given in Theorem \ref{thm:Verchota}.  Extend $\boldb$ by zero outside $\Omega$ and denote the extension by $\overline{\boldb}$.  Let $\rho$ be a non-negative radial function such that $\supp \rho \subset B_1$ and $\int_{\mathbb{R}^n} \rho \myd{x}=1$. For $\varepsilon>0$, we define $\rho_{\varepsilon} (x) = \varepsilon^{-n} \rho(x/\varepsilon)$ and  $\boldb_{\varepsilon} =(\rho_{\varepsilon} * \overline{\boldb})$. Then $\boldb_{\varepsilon}\in C^\infty(\Omega)^n$. Also, $\boldb_{\varepsilon}\rightarrow \boldb$ a.e. on $\Omega$ and $\boldb_{\varepsilon}\rightarrow \boldb$ in $\Leb{q}(\Omega)^n$ for $1<q<p$. By Young's convolution inequality in weak space (see Theorem 1.4.25 in \cite{G} e.g.), we have
\[  \norm{\boldb_{\varepsilon}}{\Lor{p}{\infty}(\mathbb{R}^n)} \leq c\norm{\rho_\varepsilon}{\Leb{1}(\mathbb{R}^n)} \norm{\boldb}{\Lor{p}{\infty}(\mathbb{R}^n)} \leq c \norm{\boldb}{\Lor{p}{\infty}(\Omega)}, \]
where $c=c(n,p)$. 
Choose $0<\varepsilon_k <\frac{1}{2}\min\left\{\dist(\overline{\Omega}_k,\Omega^c),\frac{1}{k}\right\}$. Since $\overline{\Omega}_k\subset \Omega$, it is easy to see that $\Div \boldb_{\varepsilon_k}\geq 0$ in $\Omega_k$. Hence the sequence $\{\boldb_{\varepsilon_k}\}$ satisfies the desired properties (ii) and (iii). This completes the proof of Proposition \ref{prop:approximation}.
\end{proof} 

Now we are ready to prove Theorem \ref{thm:main-theorem-1} when $n=2$. 
\begin{proof}[Proof of Theorem \ref{thm:main-theorem-1} when $n=2$] 
Choose a sequence of domains $\{\Omega_k\}$ and a sequence of vector fields $\{\boldb_k\}$ which are given in Proposition \ref{prop:approximation}. Let $A_{\varepsilon}= (A*\rho_{\varepsilon})$, where $\{\rho_{\varepsilon}\}$ is a standard mollifier  given in a proof of Proposition \ref{prop:approximation}.  Then by Young's convolution inequality, we have $\norm{A_\varepsilon}{\Leb{\infty}(\mathbb{R}^2)}\leq \norm{\rho}{\Leb{1}(\mathbb{R}^2)}\norm{A}{\Leb{\infty}(\mathbb{R}^2)}$.  Note also that for any $\xi \in \mathbb{R}^n$, we have 
\[ \xi^T A_{\varepsilon }(x)\xi = \int_{\mathbb{R}^2}   [\xi^T A(x-y) \xi] \rho_{\varepsilon}(y) \myd{y} \geq \delta |\xi|^2 \int_{\mathbb{R}^2} \rho_{\varepsilon}(y)\myd{y}=\delta|\xi|^2.    \]

By the Lax-Milgram theorem, there exists a unique  $v_k\in \oSob{1}{2}(\Omega_k)$ satisfying 
\begin{equation}\label{eq:approximation-problem}   
\int_{\Omega_k} (A^T_{1/k} \nabla v_k)\cdot \nabla \phi -(\boldb_k\cdot \nabla v_k)\phi \myd{x}= -\int_{\Omega_k} \boldG \cdot \nabla \phi \myd{x} 
\end{equation}
for all $\phi \in C_c^\infty(\Omega_k)$. Moreover, we have 
\[   \norm{\nabla v_k}{\Leb{2}(\Omega_k)} \leq \norm{\boldG}{\Leb{2}(\Omega)} \leq C \norm{\boldG}{\Leb{q}(\Omega)},\]
where $C$ depends only on  $q$, and $|\Omega|$.  

On the other hand, since coefficients are smooth, it follows from the classical $\Leb{q}$-theory (see \cite{GT} e.g.) that there exists a unique $u_k \in \oSob{1}{q}(\Omega_k)$ satisfying \eqref{eq:approximation-problem}. Set $w_k=u_k-v_k$. Note that $w_k \in \oSob{1}{2}(\Omega_k)$ and $w_k$ satisfies \eqref{eq:approximation-problem} with $\boldG=\mathbf{0}$. Hence $\nabla w_k=\mathbf{0}$. By the Poincar\'e inequality, this implies that $w_k$ is identically zero in $\Omega_k$ and hence $v_k\in \oSob{1}{q}(\Omega_k)$.  

Since $q>2$, $\norm{\boldb_k}{\Lor{2}{\infty}(\Omega)}\leq C \norm{\boldb}{\Lor{2}{\infty}(\Omega)}$ for all $k$, and $\Omega_k$ satisfies the property in Proposition \ref{prop:approximation} (i),    it follows from Theorem \ref{thm:higher-integrability} that there exists $p>2$ depending on $n$, $\delta$, $K$, $\norm{\boldb}{\Lor{n}{\infty}(\Omega)}$ and the Lipschitz character of $\Omega$ such that 
\[   \norm{\nabla v_k}{\Leb{p}(\Omega_k)}\leq C \norm{\boldG}{\Leb{q}(\Omega)} \]
for some constant $C$ depending on $n$, $\delta$, $q$, $K$, and $\Omega$, which is uniform in $k$. Extend $v_k$ to be zero outside $\Omega_k$ and still denote it by $v_k$. Then $v_k \in \oSob{1}{p}(\Omega)$ and $v_k$ satisfies 
\[   \norm{\nabla v_k}{\Leb{p}(\Omega)} \leq C \norm{\boldG}{\Leb{q}(\Omega)} \]
for some constant $C$ independent of $k$. By Poincar\'e  inequality, the sequence $\{v_k\}$ is uniformly bounded in $\oSob{1}{p}(\Omega)$. Hence by the weak compactness result in $\oSob{1}{p}(\Omega)$, there exists a subsequence (which we still denote it by $\{v_k\}$) and $v\in \oSob{1}{p}(\Omega)$ such that $v_k\rightarrow v$ weakly in $\Sob{1}{p}(\Omega)$. Since $p>2$, we have  $\boldb_k \rightarrow \boldb$ in $\Leb{p'}(\Omega)$. Hence by Proposition \ref{prop:basic-estimates}, we have 
\[   \lim_{k\rightarrow \infty} \int_\Omega (A^T_{1/k}\nabla v_k)\cdot \nabla \phi- (\boldb_k \cdot \nabla v_k)\phi \myd{x} =\int_\Omega (A^T\nabla v)\cdot \nabla \phi -(\boldb\cdot \nabla v)\phi \myd{x} \]
for any $\phi \in C_c^\infty(\Omega)$. This implies that $v$ is a weak solution of \eqref{eq:non-divergence-type} with $g=\Div \boldG$. Moreover, $v$ satisfies 
\[ \norm{\nabla v}{\Leb{p}(\Omega)} \leq C \norm{\boldG}{\Leb{q}(\Omega)}  \]
for some constant $C$ depending on  $q$,  $\delta$, $K$, and $\norm{\boldb}{\Lor{2}{\infty}(\Omega)}$, and $\Omega$. To show the uniqueness part, suppose that $v\in \oSob{1}{p}(\Omega)$ is a weak solution satisfying 
\begin{equation}\label{eq:zero-solution}
  \int_\Omega (A^T\nabla v)\cdot \nabla \phi - (\boldb \cdot \nabla v)\phi \myd{x} =  0 
\end{equation} 
for all $\phi \in C_c^\infty(\Omega)$. By Proposition \ref{prop:basic-estimates}, \eqref{eq:zero-solution} holds for all $\phi \in \oSob{1}{2}(\Omega)$.  Since $\Div \boldb \geq 0$ in $\Omega$, it follows from Proposition \ref{prop:basic-estimates} that 
\[   -\int_{\Omega} (\boldb\cdot \nabla v)v\myd{x}\geq 0\quad \text{for all } v\in \oSob{1}{p}(\Omega).\]
Since $p>2$, letting $\phi=v$ in \eqref{eq:zero-solution}, we get 
\[  \delta \int_\Omega |\nabla v|^2 \myd{x}=0. \] 
Therefore, it follows from the Poincar\'e inequality that $v$ is identically zero in $\Omega$. This completes the proof of Theorem \ref{thm:main-theorem-1} when $n=2$. 
\end{proof}

Now we are ready to prove Theorem \ref{thm:main-theorem-2}.

\begin{proof}[Proof of Theorem \ref{thm:main-theorem-2}]
(i) Let $r=(2+\varepsilon)'$, where $\varepsilon>0$ appears in Theorem \ref{thm:main-theorem-1} corresponds to {$q=3$}. Suppose that $u\in \oSob{1}{r}(\Omega)$ satisfies \eqref{eq:weak-solution-uniqueness}. To show that $u=0$, let $\boldG \in C^\infty_c(\Omega)^n$ be given. Then by Theorem \ref{thm:main-theorem-1}, there exists a unique $\psi\in \oSob{1}{r'}(\Omega)$ such that 
\[   \int_\Omega (A^T \nabla \psi)\cdot \nabla \phi -(\boldb \cdot \nabla \psi)\phi \myd{x}=-\int_\Omega \boldG \cdot \nabla \phi \myd{x} \]
for all $\phi \in C^\infty_c(\Omega)$.  Moreover, we have
\[ \norm{\nabla v}{\Leb{2+\varepsilon}(\Omega)}\leq C\norm{\boldG}{\Leb{3}(\Omega)} \]
for some constant $C=C(n,\norm{\boldb}{\Lor{n}{\infty}(\Omega)},\delta,K,\Omega)>0$. Choose a sequence $\psi_k\in C^1(\overline{\Omega})$ with $\psi=0$ on $\partial\Omega$ satisfying $\psi_k\rightarrow \psi$ in $\Sob{1}{r'}(\Omega)$.  Then by Proposition \ref{prop:basic-estimates}, we can take $\phi=u$, and so we get 
\begin{align*}
-\int_\Omega \boldG \cdot \nabla u \myd{x}&=\int_\Omega (A^T\nabla \psi)\cdot \nabla u -(\boldb\cdot \nabla \psi)u\myd{x} \\
&=\lim_{k\rightarrow \infty} \int_\Omega (A^T\nabla \psi_k)\cdot \nabla u -(\boldb\cdot \nabla \psi_k)u\myd{x}=0.
\end{align*}
Since $\boldG \in C^\infty_c(\Omega)^n$, this implies that $\nabla u=\mathbf{0}$  a.e. on $\Omega$.  Hence by the Poincar\'e inequality, we conclude that $u$ is identically zero in $\Omega$. This completes the proof of (i).

(ii) Let $1<p<2$ and let $f\in \Sob{-1}{2-}(\Omega)$ be given. By Theorem \ref{thm:main-theorem-1}, there exists $2<s'<p'$ such that for every $g\in \Sob{-1}{p'}(\Omega)$, there exists a unique $v=Lg\in \oSob{1}{s'}(\Omega)$ satisfying 
\[   \int_\Omega (A^T \nabla v)\cdot \nabla \psi \myd{x}-\int_\Omega (\boldb \cdot \nabla v)\psi \myd{x}= \action{g,\psi} \]
for all $\psi \in C_c^\infty(\Omega)$. Moreover, we have 
\[ \norm{Lg}{\Sob{1}{s'}(\Omega)}\leq C \norm{g}{\Sob{-1}{p'}(\Omega)}  \]
for some constant $C>0$ depending on $n$, $p$, $\norm{\boldb}{\Lor{n}{\infty}(\Omega)}$, $\delta$, $K$, $\Omega$. 

Since $p<s<2$, it follows that $f\in \Sob{-1}{s}(\Omega)$. Define $\ell$ on $(\oSob{1}{p}(\Omega))'$ by $\ell(g)=\action{f,Lg}$. Then $\ell$ is linear and 
\[ \left|\action{f,Lg}\right|\leq \norm{f}{\Sob{-1}{s}(\Omega)}\norm{Lg}{\Sob{1}{s'}(\Omega)}\leq C \norm{f}{\Sob{-1}{s}(\Omega)} \norm{g}{\Sob{-1}{p'}(\Omega)}   \]
for some constant $C>0$ depending on $n$, $p$, $\norm{\boldb}{\Lor{n}{\infty}(\Omega)}$, $\delta$, $K$, $\Omega$. This implies that $\ell \in (\oSob{1}{p}(\Omega))''$. Since $\oSob{1}{p}(\Omega)$ is reflexive, it follows that there exists $u\in \oSob{1}{p}(\Omega)$ such that 
\[   \ell(g) = \action{g,u}\]
for all $g\in \Sob{-1}{p}(\Omega)$. Now given $\phi \in C_c^\infty({\Omega})$, define a linear functional $g$ by
\begin{equation}\label{eq:density-argument} 
     \action{g,\psi}=\int_\Omega (A^T\nabla \phi)\cdot \nabla \psi -(\boldb\cdot \nabla \phi) \psi \myd{x}\quad \text{for } \psi \in C_c^\infty(\Omega).
\end{equation}
By Proposition \ref{prop:basic-estimates}, a standard density argument shows that \eqref{eq:density-argument} holds for all $\psi \in \oSob{1}{p}(\Omega)$. So $g\in \Sob{-1}{p'}(\Omega)$. Then by definitions of $L$ and $\ell$, we have  $Lg=\phi$ and so 
\begin{equation}\label{eq:my-weak-solution}
\begin{aligned}
&\relphantom{=}\int_{\Omega} (A\nabla u)\cdot \nabla \phi -(u\boldb)\cdot \nabla \phi \myd{x}\\
&=\int_\Omega (A^T \nabla \phi)\cdot \nabla u -(\boldb\cdot \nabla \phi)u \myd{x}\\
&=\action{g,u} =\ell(g)=\action{f,Lg}=\action{f,\phi} 
\end{aligned}
\end{equation}
for all $\phi \in C_c^\infty(\Omega)$. This proves that given $f\in \Sob{-1}{2-}(\Omega)$ and $1<p<2$, there exists a function $u\in \oSob{1}{p}(\Omega)$ satisfying \eqref{eq:my-weak-solution}. Since $p<2$ is arbitrary chosen, it follows that $u\in \oSob{1}{2-}(\Omega)$. {This proves the existence of weak solution $u\in \oSob{1}{2-}(\Omega)$ satisfying \eqref{eq:divergence-type}. The uniqueness part of (ii) follows by (i).}  This completes the proof of Theorem \ref{thm:main-theorem-2}. 
\end{proof}

\bibliographystyle{amsplain}
\bibliography{Ref}

\end{document}